\documentclass[11pt]{article} 
\usepackage[latin1]{inputenc}
\usepackage{amsmath,amssymb}
 \usepackage{xcolor}
\usepackage{multicol}
\usepackage{floatrow}
\usepackage{latexsym,epsfig}
\usepackage{amssymb,amsmath,amsfonts,amscd,amsthm}
\usepackage{exscale}
\usepackage{ifthen}
\usepackage{longtable}
\usepackage{subfigure}
 \usepackage{cite}
\usepackage{lscape}
\usepackage{blindtext}
\usepackage{soul}
\usepackage{mathtools}
 \usepackage{indentfirst}
\usepackage{dirtytalk}
\usepackage{ upgreek }

\usepackage{ulem}
\usepackage{enumerate}
\usepackage{ bbold }

\usepackage{cases}
\usepackage{ mathrsfs }
\usepackage{ latexsym }
\usepackage{setspace}
\onehalfspacing

 \newtheorem{theo}{\textbf{Theorem}\ }
[section]
\newtheorem{lem}[theo]{\textbf{Lemma}\ }
\newtheorem{coro}[theo]{Corollary\ }

\newtheorem{prop}[theo]{\textbf{Proposition}\ }

\newtheorem{exam}[theo]{Example\ }

\newtheorem{remark}[theo]{\textbf{Remark}\ }

\hoffset=-1cm\voffset=-1.5cm
\hyphenation{si-que}\textheight=22cm
\textwidth=15cm

\def \x\ {\mathbb{X}}



	 				\definecolor{amber}{rgb}{1.0, 0.75, 0.0}
	\definecolor{airforceblue}{rgb}{0.36, 0.54, 0.66}
		\definecolor{alizarin}{rgb}{0.82, 0.1, 0.26}
	\definecolor{amaranth}{rgb}{0.9, 0.17, 0.31}	
	\definecolor{amber(sae/ece)}{rgb}{1.0, 0.49, 0.0}	
	\definecolor{americanrose}{rgb}{1.0, 0.01, 0.24}	
	\definecolor{amethyst}{rgb}{0.6, 0.4, 0.8}	
	\definecolor{antiquebrass}{rgb}{0.8, 0.58, 0.46}	
	\definecolor{ao(english)}{rgb}{0.0, 0.5, 0.0}	
	\definecolor{apricot}{rgb}{0.98, 0.81, 0.69}	
	\definecolor{aquamarine}{rgb}{0.5, 1.0, 0.83}	
	\definecolor{armygreen}{rgb}{0.29, 0.33, 0.13}
	\definecolor{arsenic}{rgb}{0.23, 0.27, 0.29}	
	\definecolor{asparagus}{rgb}{0.53, 0.66, 0.42}	
	\definecolor{auburn}{rgb}{0.43, 0.21, 0.1}	
	\definecolor{awesome}{rgb}{1.0, 0.13, 0.32}	
	\definecolor{ballblue}{rgb}{0.13, 0.67, 0.8}	
	\definecolor{bananayellow}{rgb}{1.0, 0.88, 0.21}	
	\definecolor{bittersweet}{rgb}{1.0, 0.44, 0.37}	
	\definecolor{bleudefrance}{rgb}{0.19, 0.55, 0.91}
	\definecolor{blush}{rgb}{0.87, 0.36, 0.51}	
	\definecolor{bostonuniversityred}{rgb}{0.8, 0.0, 0.0}	
	\definecolor{brickred}{rgb}{0.8, 0.25, 0.33}	
\definecolor{brightmaroon}{rgb}{0.76, 0.13, 0.28}	
	\definecolor{brilliantrose}{rgb}{1.0, 0.33, 0.64}
	\definecolor{britishracinggreen}{rgb}{0.0, 0.26, 0.15}	
	\definecolor{bulgarianrose}{rgb}{0.28, 0.02, 0.03}	
	\definecolor{burgundy}{rgb}{0.5, 0.0, 0.13}	
	\definecolor{burntorange}{rgb}{0.8, 0.33, 0.0}	
	\definecolor{byzantium}{rgb}{0.44, 0.16, 0.39}			
	\definecolor{candyapplered}{rgb}{1.0, 0.03, 0.0}
	\definecolor{capri}{rgb}{0.0, 0.75, 1.0}	
	\definecolor{carminered}{rgb}{1.0, 0.0, 0.22}	
	\definecolor{charcoal}{rgb}{0.21, 0.27, 0.31}	
	\definecolor{coquelicot}{rgb}{1.0, 0.22, 0.0}	
	\definecolor{daffodil}{rgb}{1.0, 1.0, 0.19}	
  

 \numberwithin{equation}{section}
 
 \begin{document}
 
 \title{The oscillating random walk on $\mathbb Z$} 
 \maketitle 

\begin{center}
{\bf D. T. Vo}
$^($\footnote{ 
 Institut Denis Poisson UMR 7013,  Universit\'e de Tours, Universit\'e d'Orl\'eans, CNRS, France. \\ Tran-Duy.Vo@lmpt.univ-tours.fr}$^)$  	 
     \end{center}

         \centerline{\bf  \small Abstract } 
 The paper is concerned with a new approach for the recurrence property of the oscillating process on $\mathbb{Z}$ in Kemperman's sense. In the case when the random walk is ascending on $\mathbb{Z}^-$ and descending on $\mathbb{Z}^+$, we determine the invariant measure of the embedded process of successive crossing times  and then prove a necessary and sufficient condition for recurrence. Finally, we make use of this result to show that the general oscillating process is recurrent under some H{\"o}lder-typed moment assumptions.

  \vspace{0.5cm}

  \noindent Keywords:     random walks,  irreducible class, invariant measure  


\section{Introduction and notations}
\subsection{Introduction}
 In parallel with many studies of classical stochastic processes,  oscillating random walks, which was introduced systematically by \textsc{Kemperman}\cite{Kemperman}, have been found to be good models with several applications, see \cite{KeilsonServi} for instance. This paper deals with the homogeneous Markov chain $\mathcal{X}^{(\alpha)}=(X_{n}^{(\alpha)})_{n\geq 0}$ indexed by a parameter $\alpha \in [0,1]$ such that $X_0^{(\alpha)}=x_0$ with some fixed $x_0 \in \mathbb Z$ and for $n\geq 1$,
 \begin{align} \label{def_alpha}
     X_{n+1}^{(\alpha)}:=X_n^{(\alpha)}+ \left(\xi_{n+1}\mathbb 1_{\{X_n^{(\alpha)} \leq -1\}} + \eta_{n+1}\mathbb 1_{\{X_n^{(\alpha)}=0\}} + \xi_{n+1}'\mathbb 1_{\{X_n^{(\alpha)} \geq 1\}} \right),
 \end{align}  
where 
\begin{itemize}
    \item the $\xi_n, n\geq 1$, have common distribution $\mu$,
    \item the $\xi'_n, n\geq 1$, have common distribution $\mu'$,
    \item the $\eta_n, n\geq 1$, have common distribution given linearly by
    $$\mathbb P[\eta_n=y]:= \alpha \mu(y) +(1-\alpha) \mu'(y) \text{ for any } y \in \mathbb Z,$$
    \item  $(\xi_n, \xi'_n, \eta_n)_{n \geq 1}$ is a sequence of independent and identically distributed (abbreviate i.i.d.) random variables.
\end{itemize}

When we want to emphasize the dependence of $\mathcal{X}^{(\alpha)}$ on the distributions $\mu$ and $\mu'$, the process $(X_n^{(\alpha)})_{n\geq 0}$ is denoted by $\mathcal{X}^{(\alpha)}(\mu,\mu')$ instead $\mathcal{X}^{(\alpha)}$. 

It is merely to say that the excursion will be directed by $\mu$ (resp. $\mu'$) as long as the process stays on the negative (resp. positive) side and therefore, this present model is often called the \textit{oscillating random walk} with respect to (w.r.t.) zero level. The choice of zero is arbitrary and can be replaced by any fixed level. In case of $\alpha \in\{0;1\}$, we use the terminology \say{\textit{crossing}} to mean the point $0$ belongs to only one of the two half lines and it belongs to both if $0<\alpha<1$.  We are mainly interested in recurrence of this process on its essential (i.e.  maximal irreducible)
classes. 

The case $\mu=\mu'$ is well-treated by $\textsc{Mijatovi\'c}$ and \textsc{Vysotsky}\cite{MijaVysot}, except providing detailed illustrations of the trajectory. The highlight result is an invariant measure (and a probability) constructed in a probabilistic manner and under the additional assumption of (topological) recurrence of the chain, it is up to a multiplicative constant finite invariant measure. For the sake of completeness, we will study all irreducible classes of $\mathcal{X}^{(\alpha)}$ from the simple case when $\xi_n\geq 0$ and $\xi_n'\leq 0$ to the general one. Appropriately refining the formula in \cite{MijaVysot}, we obtain the exact invariant measure and then apply the idea of \textsc{Knight}\cite{Knight78} to get its discrete version, see Section $2$.   

   Section $3$ is devoted to such an important sub-process of $\mathcal{X}^{(0)}$ evolving within a definite state space, whose elements are recorded at corresponding successive crossing times. A particular interest will be put on the structure of (essential) irreducible classes, especially on $\mathcal{N}$, the set of isolated states which are impossible to reach from any opposite states in a single step.  Theorem \ref{condition} stipulates some mild conditions for $\mathcal{N}$ to be empty and also yields an expression for it.  In analogy to the approach for reflected random walks, we finally compute the invariant measure for the sub-process based on some arguments developed in the previous work of \textsc{Peign\'e} and \textsc{Woess}\cite{MarcWoess}. 
     
   The recurrence of the general oscillating random walk $\mathcal{X}^{(\alpha)}$ is dealt with in Section $4$.  Some powerful tools such as the \textsc{Kemperman}'s criterion \cite{Kemperman} (a divergent series represented in term of renewal functions) or the integral criterion of \textsc{Rogozin} and \textsc{Foss} \cite{RogozinFoss} (a transformation established with the help of Wiener-Hopf factorization) are mentioned for reference. We also furnish a new approach coming from the fact that the recurrence of the crossing sub-process implies to  the recurrence of the full process. To do this, we first show that the process of crossing is (positive) recurrent if the tail distribution condition
   $$\displaystyle \sum_{n = 0}^{+\infty} \mathbb P(\xi_1 >n)\,\mathbb P(\xi_1'<-n) < +\infty$$
  holds when $\xi_n \geq 0, \xi_n'\leq 0$. Moreover, it can be attained under the hypothesis $\mathbb{E}[\xi_1^p]<+\infty$ and $\mathbb{E}[(-\xi_1')^q] <+\infty$, where $p, q \in ]0,1[$ satisfying $p+q=1$. When jumps are generalized on $\mathbb Z$, there may have different possibilities, for instance, both $\xi_n$ and $\xi_n'$ are either drifted (positive and negative, respectively) or centered as well as mixed and by Theorem \ref{extend_condition}, we will address suitable conditions to each corresponding situation. 
   
\subsection{Notations}    
  Throughout this paper, we fix some  frequently used notations
  \begin{itemize}
      \item $S_{\mu}\, (\text{resp. } S_\mu'):$ the support of $\mu \, (\text{resp. } \mu')$.
      \item  $D\, (\text{resp. } D'): \text{ the maximum of }\mu \, (\text{resp. the minimum of } \mu')$.\\
     We adhere to the convention that $D=+\infty $ (resp. $D'=-\infty$) when $S_\mu$ (resp. $S_{\mu'}$) is unbounded from above (resp. from below).
      \item $d \,(\text{resp. }d'):$ the greatest common divisor of $S_\mu \, (\text{resp. } S_\mu')$. 
      
      \item $\mathbb Z^{+}/\mathbb Z^{-}:$ the set of positive/negative integers $(\text{and }\mathbb Z_0^+/\mathbb Z_0^- \text{ if } 0 \text{ is included})$. 
      
        \item $r_x$: the remainder of $x$ in the Euclidean division by $\delta$ (i.e. $0\leq r_x<\delta$).
      \end{itemize}
      
 Let us end this paragraph dedicated to notations by reminding that for any fixed $0 \leq \alpha \leq 1$, the  chain \eqref{def_alpha} is denoted by
     $ \mathcal{X}^{(\alpha)} (\mu, \mu')$ (and simply $\mathcal{X}^{(\alpha)} $ when there is no ambiguity on the choices of $\mu$ and $\mu'$).

\section{Irreducible classes and invariant measure of $ \mathcal{X}^{(0)} (\mu, \mu')$} 
It is easy to check that if $\mu=\mu'$ (in this case $\mathcal{X}^{(0)} (\mu, \mu')$ becomes an ordinary random walk on $\mathbb Z$ with the unique jump measure $\mu$)  then $d=d'$ and the irreducible classes of $\mathcal{X}^{(0)} (\mu, \mu')$ are the sets  $r+d\mathbb Z$ with $0\leq r <d$. In this section, we  describe the essential classes of  $\mathcal{X}^{(0)} (\mu, \mu')$ when $\mu\neq \mu'$. 
\subsection{ The chain $  \mathcal{X}^{(0)}(\mu, \mu')$ when  $S_\mu \subset \mathbb Z^+$ and $S_{\mu'} \subset \mathbb Z^-$} 

For any $x \in \mathbb Z$, let  ${\mathcal I}(x)$ be the irreducible class of $x$. It holds $ {\mathcal I}(0)\subset \{D',   \ldots, D-1\}$. Furthermore, for any starting point  $x$, after finitely many steps, the chain $\mathcal X ^{(0)} $ stays  for ever a.s. in the subset $\{D', \ldots, D-1\}$.

\begin{theo}\label{irreducibleclass} {  We suppose that $S_\mu \subset \mathbb Z^+$ and $S_{\mu'} \subset \mathbb Z^-$.}

$\bullet$ Assume first $D$ and $D'$ are finite. If  $d \wedge d'=\delta$,  then 

i) there exist $\delta$ irreducible essential classes 
\[
 \{D', \ldots, D-1\}\cap (r+\delta \mathbb Z) \quad with \quad 0\leq r <\delta;
\]
(in particular  the irreducible class of $0$ equals $\mathcal I(0)= \{D', \ldots, D-1\}\cap  \delta \mathbb Z$);

ii)
 if $x \geq D$ or $x<D'$  then $x$ is transient  and, after finitely many steps, reaches $\mathbb P$-a.s. the  essential class $ \{D', \ldots, D-1\}\cap (r_x+\delta   \mathbb Z)$.
 
 $\bullet$ If $D'=-\infty$ and $D$ is finite then
 
 i)  there exist $\delta$ irreducible essential classes 
 \[
 ]-\infty,  D-1]\cap (r+\delta \mathbb Z) \quad with \quad 0\leq r <\delta;
\]

ii)
 the $x \geq D$   are all  transient  and, after finitely many steps, reaches $\mathbb P$-a.s. the  essential class $ ]-\infty, \ldots, D-1]\cap (r_x+\delta \mathbb Z)$.
 
 (similar dual statement follows when $D'$ is finite and $D=+\infty$).
 


$\bullet$ If $D=+\infty$  and $D'=-\infty$  then
  there exist $\delta$ irreducible essential classes, which are all essential: 
 \[
 r+\delta \mathbb Z  \quad with \quad 0\leq r <\delta.
\]

\end{theo}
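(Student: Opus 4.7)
The proof splits by cases according to the finiteness of $D$ and $D'$, but all three bullets rest on the same four ingredients: the absorbing property of $\{D', \ldots, D-1\}$ (when finite), the transience of every outside state, the invariance of the residue modulo $\delta$, and the irreducibility of the chain restricted to each residue class.

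For absorption when $D, D' < \infty$, a direct bound is enough: from $x \leq -1$ the jump $\xi_{n+1}$ lies in $\{1, \ldots, D\}$, so $X_{n+1} \leq D-1$; from $x \geq 0$ the jump $\xi'_{n+1}$ lies in $\{D', \ldots, -1\}$, so $X_{n+1} \geq D'$. Transience of any $x \notin \{D', \ldots, D-1\}$ follows from the deterministic monotone trend: every $\mu$-jump is at least $1$ and every $\mu'$-jump at most $-1$. Starting from $x \geq D$, repeated $\mu'$-steps drop the chain into $\{\leq D-1\}$ after at most $x-D+1$ steps, and the entry point is $\geq D + D' \geq D'$, hence already inside the trap; the case $x < D'$ is dual. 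Modular invariance is immediate: every jump is a multiple of $d$ or of $d'$, and both are multiples of $\delta$, so $X_n \equiv X_0 \pmod{\delta}$ for all $n$, which pins down the residue class that is reached.

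The core technical step is the irreducibility of the chain restricted to each $\mathcal{C}_r := \{D', \ldots, D-1\} \cap (r + \delta \mathbb{Z})$. I would argue it via a B\'ezout/excursion argument. B\'ezout yields $\delta = ud + vd'$ for some integers $u, v$, and the classical numerical-semigroup result ensures that every sufficiently large multiple of $d$ (resp.\ of $d'$) is a non-negative integer combination of elements of $S_\mu$ (resp.\ of $|S_{\mu'}|$). An \emph{excursion} --- an ascent from $\leq -1$ via $\mu$-jumps until crossing into $\geq 0$, followed by a descent via $\mu'$-jumps until returning to $\leq -1$ --- therefore produces a displacement lying in $\delta \mathbb{Z}$, and the set of achievable displacements generates the full group $\delta \mathbb{Z}$. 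Given $y, z \in \mathcal{C}_r$, after possibly absorbing an initial $\mu'$-step (to reduce to $y \leq -1$) and a final $\mu$-step (to land on a target $z \geq 0$), I would concatenate finitely many excursions whose cumulative displacement equals $z - y$, tuning the last one to hit $z$ exactly, with positive probability. The main obstacle is precisely this bookkeeping: because the jump distribution depends on the current sign of the state, one cannot freely prescribe a sequence of jumps; one must show that the prescribed type of jump is available at each step and that the path stays inside the trap.

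The unbounded cases require only minor adaptations. When $D' = -\infty$ and $D < +\infty$, the one-sided bound shows that $]-\infty, D-1]$ is stable, only the points $x \geq D$ are transient, and the B\'ezout/excursion argument of the previous paragraph goes through unchanged (having $\mu'$-jumps of arbitrary depth only enlarges the set of admissible excursions). The case $D = +\infty$ is symmetric. When both $D, D'$ are infinite there are no transient states, the state space is all of $\mathbb{Z}$, modular invariance yields $\delta$ candidate classes $r + \delta \mathbb{Z}$, and the same irreducibility argument shows each is essential.
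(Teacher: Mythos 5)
Your decomposition into absorption, transience, residue invariance mod $\delta$, and irreducibility of each residue class matches the skeleton of the paper's argument, and the first three ingredients are handled correctly. The gap is in the fourth, which is the heart of the theorem, and you flag it yourself without closing it: the ``bookkeeping'' of which jumps are available at each step is not a technicality but the entire difficulty. Two concrete problems with the B\'ezout/excursion sketch. First, the numerical-semigroup fact you invoke only guarantees that \emph{sufficiently large} multiples of $d$ (resp. of $|d'|$) are non-negative combinations of elements of $S_\mu$ (resp. of $|S_{\mu'}|$); but inside the bounded trap $\{D',\ldots,D-1\}$ the total ascent of any excursion is capped by $D+|D'|$, so large combinations are unavailable, and small multiples of $d$ need not lie in the semigroup (e.g. $S_\mu=\{2,5\}$ has $d=1$ yet ascents $1$ and $3$ are not attainable). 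Second, the set of achievable excursion displacements depends on the starting point: from the bottom of the trap no negative displacement is achievable, and from a point adjacent to $0$ the ascent is forced to be a single jump. Hence ``the set of achievable displacements generates $\delta\mathbb Z$'' is neither a position-free statement nor, as a statement about a generating set of a group, sufficient to produce a path from $y$ to $z$ whose partial sums stay inside $\{D',\ldots,D-1\}$ with the correct sign at every step.

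The paper closes exactly this hole by inducting on the cardinality of the support. For $T=\{s,s'\}$ the orbit of $0$ is \emph{deterministic}; since it lives in the finite set $\{s',\ldots,s-1\}$ it is eventually periodic, a period corresponds to a relation $ks+\ell s'=0$ with $k,\ell\ge 1$, and $s\wedge s'=1$ forces $k\ge |s'|$ and $\ell\ge s$, so the periodic orbit has at least $s+|s'|$ distinct points and therefore exhausts $\{s',\ldots,s-1\}$. The inductive step adjoins one support point $t'$ at a time and uses that $t'$ generates $\mathbb Z/\delta_T\mathbb Z$ to sweep through all residues, before passing to the unbounded cases by letting $s'\to-\infty$. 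To salvage an excursion-style proof you would need an argument of comparable precision at exactly the point you labelled ``the main obstacle.''
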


\begin{proof}
The proof of \textsc{Kemperman} based on the theory of semi-groups of $\mathbb Z$ entirely solved for the chain $ \mathcal{X}^{(\alpha)}, 0\leq \alpha \leq 1$ providing that $D=-D'=+\infty$ (see Remark \ref{ess_general}).
 Back to the current model, we will prove by induction, but let us first fix some notations.

Let $T$ be a finite subset of  $ S_\mu\cup S_{\mu'}$ s.t.  $ T\cap S_\mu \neq \emptyset $ and $T\cap S_{\mu'}\neq \emptyset$; without loss of generality, we assume $0\notin T$.

For any   $x \in \mathbb Z$, we  denote by  $\mathcal O_{T}(x)$ the ``{\it orbit of $x$  under $T$}'', that is the set  of sequences ${\bf x}= (x_i)_{i \geq 0}$ defined  by  induction   as follows: $x_0=x$ and, for any $i\geq 1$,  

 - if $x_i\leq -1$ then $x_{i+1}= x_i+s $ for some $s \in T \cap S_\mu$; 
 
  - if $x_i\geq 0$ then $x_{i+1}= x_i+s'$ for some $s' \in T\cap S_{\mu'}.$

Notice that all the   $x_i$ but finitely many  do belong to $\{ \min(S_{\mu'}),  \ldots, \max (S_\mu)-1\}$.

For any $x, y \in \mathbb Z$, we write $x\ {\stackrel{T}{\rightarrow}}\ y$ if there exists ${\bf x} \in \mathcal O_T(x)$ s.t. $x_0=x$ and $x_n= y$ for some $n\geq 0$. When there exists  no such sequence $\bf x$, we write  $x\ {\stackrel{T}{\not\rightarrow}}\ y$. 
The notation $x\ {\stackrel{T}{\leftrightarrow}}\ y$ means $x\ {\stackrel{T}{\rightarrow}}\ y$ and $y\ {\stackrel{T}{\rightarrow}}\ x$.

The relation  $ {\stackrel{T}{\leftrightarrow}} $ is an equivalence relation on $\mathbb Z$ whose classes are called  {\it $T$-irreducible classes.  }
The $T$-irreducible class of $x$ is denoted $\mathcal I_{T}(x)$. 

The relation $ {\stackrel{T}{\rightarrow}} $    induces a partial order relation on $\mathcal I_T$, denoted again $ {\stackrel{T}{\rightarrow}}$. The maximal irreducible classes for this relation are called $T${\it -essential}; a non essential irreducible class is said $T$-transient.

 We now describe ${\mathcal I}_T$, by induction on the cardinality of $T$.

 \noindent \underline{{\bf Step 1-} {\it Case when   $T= \{s, s'\}$ with $s \in \mathbb Z^{+}$ and $s'\in \mathbb Z^{-}$. }}

$\bullet$ {\it We assume first $s\wedge s'=1,  x=0$ and prove that $\{s',   \ldots, s-1\}$  is the unique $T$-essential  class. Furthermore, $\mathcal I_T(x)$ equals $\{x\}$ and is $T$-transient when  $x\geq s$ or $x<s'$.}

  Indeed,   for any  ${\bf \omega} = ( \omega_i)_{i \geq 0}  $ in $\mathcal O_T(0)$,
  the $\omega_i$ all belong to $\mathbb N s+\mathbb N s'$ and to $\{s', \ldots, s-1\}$. Hence, there exist $j>i\geq 1$ such that $\omega_j=\omega_i$. Since $\omega_{i}\neq \omega_{i+1}, \omega_{j-1}\neq \omega_j$ and $\omega_i\  {\stackrel{T}{\rightarrow}}  \ \omega_j=\omega_i$, there exists $k, \ell \geq 1$ such that $ks+\ell s' = 0$. The condition  $s\wedge s'=1$ yields $k=0$ mod$(s')$ and $\ell = 0$ mod$(s$), hence $k\geq |s'|$ and $\ell \geq s$. Consequently, the sub-orbit $\{\omega_i, \omega_{i+1}, \ldots, \omega_{j-1}\}$ contains at least $s+|s'|$ elements; since it is included in $\{s', \ldots, s-1\}$, it holds in fact  $\{\omega_i, \omega_{i+1}, \ldots, \omega_{j-1}\}=\{s', \ldots, s-1\}.$  This proves that $x\  {\stackrel{T}{\rightarrow}}  \ y$ for any $x, y \in \{s', \ldots, s-1\}$, hence $\{s', \ldots, s-1\} \subset \mathcal I_T(0)$. Eventually, $\{s', \ldots, s-1\}= \mathcal I_T(0)$ since the elements of all the orbits of $0$ remain in $\{s', \ldots, s-1\}$. This also implies that $\{s', \ldots, s-1\}$ is $T$-essential.

  As a consequence of this argument,  the orbit $\mathcal O_T(0)$  contains a unique sequence   $\omega$, which is  is periodic with period  $ \omega_0, \omega_1, \ldots, \omega_{s +|s'|-1}$  where $\omega_i \in \{s', \ldots, s-1\}$ and $\omega_i\neq \omega_j$ for any $0\leq i<j<  s +|s'|.$ We write for short $ \omega = \overline {\omega_0, \ldots, \omega_{s +|s'|-1}}$ and emphasize that  $\{\omega_0, \ldots, \omega_{s +|s'|-1}\}=\{s', \ldots, s-1\}$.

 Now, if $x\geq s$ or $x<s'$ and ${\bf x} = (x_i)_{i \geq 0} \in \mathcal O_T(x)$, then $x_i \neq x$ for any $i\geq 1$; in other words, $\mathcal I_T(x) = \{x\}$ and  $x$ is $T$-transient.

 $\bullet$ {\it Assume now $s\wedge s'= \delta \geq 2$. Then, the $T$-essential classes are $\{s', \ldots, s-1\}\cap (x+\delta \mathbb Z)$.  Furthermore, if $x\geq s$ or $x<s'$, then $\mathcal I(x)$ equals $\{x\}$ and is $T$-transient.}

In this case, the set  $\mathcal O_T(0)$  still contains   a unique sequence 
 $\omega= \overline {\omega_0, \ldots, \omega_{k-1}}$  with $k= {s+ |s'| \over \delta} $ and $\{\omega_0, \ldots, \omega_{ k-1}\}=\{s', \ldots, s-1\}\cap \delta \mathbb Z$. As a direct consequence, for any $x \in \mathbb Z$, the set $\mathcal O_T(x)$ is included in $x+\delta \mathbb Z$ and also contains a unique sequence ${\bf x}$, which is ultimately periodic with period $r_x+\omega_0, \ldots,  r_x+\omega_{k-1}$. The description of the $T$-irreducible classes follows immediately.  
 \vspace{3mm}
 
  \noindent \underline{{\bf Step 2-}\it   Case when $T\subset \mathbb Z$ is finite and satisfies $T\cap \mathbb Z^{-} \neq \emptyset$ and  $T\cap \mathbb Z^{+} \neq \emptyset$}.
 
 The proof is made by induction, from $T$ to   ${\bf T}:= T\cup \{t'\}$ with $t'\in  \mathbb Z^{-}$  ; the case when ${\bf T}:= T\cup \{t\}$ with ${t}\in  \mathbb Z^{+}$   is studied in the same way. By Step 1, the property is true for $T=\{s, s'\}, s>0$ and $s'<0$.
 
\noindent \underline{Hypothesis of induction:}  {\it
 Let $T$ be a  set of  non-zero integers s.t. $ T\cap \mathbb Z^{-} \neq \emptyset$  and  $ T\cap \mathbb Z^{+} \neq \emptyset$. We set $\delta_T:=\text{\rm gcd}(T)$ and denote $s$ (resp. $s'$) the largest (resp. smallest) element of $T$. We assume that
 
 - the $T$-essential classes are $\{s', \ldots, s-1\}\cap (r+\delta_T \mathbb Z)$ with $0\leq r<\delta_{ T}$;
 
 - when $x\geq s$ or $x<s'$, then  $\mathcal I_T(x)=\{x\}$ and $x$ is $T$-transient; furthermore, for any ${\bf x} = (x_i)_{i \geq 0} \in \mathcal O_T(x)$,  all $x_i$ but  finitely many  belong to  the $T$-essential class $\{s', \ldots, s-1\}\cap (x+\delta_T \mathbb Z)$.

\noindent \underline{Conclusion:} For $t' \in  \mathbb Z^{-}$ then the same property holds for ${\bf T}= T\cup\{t'\}$. In other words, setting $\delta_{\bf T}:= \text{\rm gcd}({\bf T})$ and $m_{\bf T}:= \min(s', t')$,

 - the ${\bf T}$-essential classes are $\{m_{\bf T}, \ldots, s-1\}\cap (r +\delta_{\bf T} \mathbb Z)$ with $0\leq r <\delta_{\bf T}$;
 
 -  if $x \geq s$ or $x<m_{\bf T}$, then $\mathcal I_{\bf T}(x)= \{x\}$ and $x$ is ${\bf T}$-transient; furthermore, for any ${\bf x} = (x_i)_{i \geq 0} \in \mathcal O_{\bf T}(x)$,  all $x_i$ but  finitely many  belong to  the $\bf T$-essential class $\{m_{\bf T}, \ldots, s-1\}\cap (x+\delta_{\bf T} \mathbb Z)$.
 }

The same argument as in Step 1  works to deduce the case $\delta_{\bf T}\geq 2$ from the case $\delta_{\bf T}=1$; thus, we only consider the case $\delta_{\bf T}=1$. 

Notice that $\{m_{\textbf{T}},\dotsi,s-1\}$ is absorbing; in other words, for any $x \in \mathbb{Z}$ and any $\bf{x}=(x_i)_{i \geq 0} \in \mathcal{O}_{\bf{T}}(x)$, all $x_i$ but finitely many belong to this set. In particular, $x$ is transient when $x\geq s$ or $x<m_{\bf T}$. It thus remains to check that $\{m_{\bf T},\dotsi,s-1\}$ is irreducible. 
 
Let us first prove that 
\begin{equation}\label{zlibekjf}
\{s', \ldots, s-1\} \ {\stackrel{\bf T}{\rightarrow}}\ 
\{s', \ldots, s-1\}
\end{equation}
For any $x \in \{s', \ldots, s-1\} $, we choose $\ell_x\geq 0$ s.t. $x+t'+\ell_x s\in\{0, \ldots, s-1\}$ and notice that $x\ {\stackrel{\bf T}{\rightarrow}}\ x+t'+\ell_x s$. Now, for any ${\bf y} = (y_i)_{i\geq 0} \in \mathcal O_T(x+t'+\ell_x s)$, all $y_i$ but finitely  many  belong to the $T$-essential class $\mathcal I_T(x+t')=\{s, \ldots, s'-1\}\cap (x+t'+\delta_T \mathbb Z)$;  thus, $\{x\}\ {\stackrel{\bf T}{\rightarrow}}\ \{s', \dots, s-1\} \cap (x+t'+\delta_T \mathbb Z).$  Reiterating the argument, we get, for $k\geq 1$  \begin{equation}\label{libve}
 \{x\}\ {\stackrel{\bf T}{\rightarrow}}\ \{s', \dots, s-1\} \cap (x+kt'+\delta_T \mathbb Z).
\end{equation}
Since $\text{\rm gcd}(t', \delta_T)=\delta_{\bf T}=1$, the class of $t'\ \text{\rm mod}(\delta_T)$ generates $\mathbb Z/\delta_T\mathbb Z$, so 
 $$
 \bigcup_{0\leq k<\delta_T}\{s', \ldots, s-1\}\cap (x+kt'+\delta_T \mathbb Z)=\{s', \ldots, s-1\}.
 $$
This yields immediately $\{x\} \ {\stackrel{\bf T}{\rightarrow}}\ \{s', \ldots, s-1\}$: indeed, for any $y \in \{s', \ldots, s-1\}$ there exists $k_y\geq 1$ s.t. $y \in \{s', \ldots, s-1\}\cap (x+k_y t'+\delta_T \mathbb Z)$ and (\ref{libve}) readily implies $ x \ {\stackrel{\bf T}{\rightarrow}} \ y$. This holds for any $x \in \{s', \ldots, s-1\}$ and proves (\ref{zlibekjf}).

This implies $\{m_{\bf T}, \ldots, s-1\}\ {\stackrel{\bf T}{\rightarrow}} \{ m_{\bf T}, \ldots, s-1\}$ when $s' < t'$. It  remains to consider the case when $t'<s'$; we fix $x \in \{t', \ldots, s'-1\}$.

The same argument as above proves that  $ \{x\} \ {\stackrel{\bf T}{\rightarrow}} \  \{s', \ldots, s-1\}$.

Conversely, we decompose  $x$ as $x=t'+k_x s+r_x$ with $k_x \geq 0$ and $0\leq r_x<s$. For any $y \in \{s', \ldots, s-1\}$, property (\ref{zlibekjf}) yields $y\ {\stackrel{\bf T}{\rightarrow}} \  r_x$ since $0\leq r_x <s$; now, immediately, $r_x 
\ {\stackrel{\bf T}{\rightarrow}} \ t'+k_x s+r_x=x$ so that $y\ {\stackrel{\bf T}{\rightarrow}} \ x$ as expected.

\vspace{3mm} 

\noindent \underline{{\bf Step 3 -}{\it   Proof of  Theorem \ref{irreducibleclass} } }
  
 When $D$ and $D'$ are finite, we set $T=S_\mu\cup S_{\mu'}$   and apply Step 2.
 
 If $D'=-\infty$ and $D$ is finite, we set $T=T_{s'}= S_{\mu}\cup (S_{\mu'}\cap \{s', \ldots, -1\})$ with $s' \leq -1$, apply Step 2 then let $s'\to -\infty$. The two other cases are treated in the same way.
 
\end{proof} 

It is worth remarking that one may extend the above to adapt for $\mathcal{X}^{(\alpha)}$, that is to say, the chain (starting at any initial point) will be absorbed after finitely many steps by the essential class
\begin{align*}
 \left\{
\begin{array}{lll}
\{D',\ldots, D-1\}& {\rm if} & \alpha=0,\\
\{D',\ldots, D\}&{\rm if} & 0<\alpha<1,\\
\{D'+1, \ldots, D\}& {\rm if} & \alpha=1.
\end{array}
\right.
\end{align*}
 under the assumption of bounded jumps. The remaining statements of the theorem are proved in an analogous way.

\subsection{ The chain  $ \mathcal{X}^{(0)} $ in the  general case} 

We start by considering the irreducible classes of $ \mathcal X^{(0)}  $ on the additional assumptions that $S_\mu \cap \mathbb Z^- \neq \emptyset$ and $S_{\mu'} \cap \mathbb Z^+ \neq \emptyset$. Intuitively, the oscillating random walk  $\mathcal{X}^{(0)}$ starting from $0$ can visit arbitrarily large integers and so, it is quite natural to think that $ {\mathcal I}(0)$ contains the whole line $\mathbb Z$ in this case, under the hypothesis $d\wedge d'=1$. In fact, it is false and depends deeply on the structure of $S_\mu$ and $S_\mu'$. The following  theorem, which is based on the ideas developed in Step 2 above, clarifies this point.

\begin{theo}\label{generalcase}
We write $S_{\mu}^{+}$ (resp. $S_{\mu'}^{+}$) and $S_{\mu}^{-}$ (resp. $S_{\mu'}^{-}$) as the positive and negative components of $S_\mu$ (resp. $S_\mu'$), respectively. Suppose that these subsets are all non-empty. If $d \wedge d'=\delta$ and $d \neq d'$ then 

$\bullet$ \underline{Case when $D <d'$}\\
i) if $x \in \{D,\ldots,d'-1\}+d'\mathbb Z^{+}_0$ then $x$ is transient and its irreducible class is $$\mathcal{I}(x)=(x+d' \mathbb Z) \cap [D,+\infty[;$$  
ii) otherwise, $x$ is essential and its essential class is given by
$$\mathcal{I}(x)=(r_x+\delta \mathbb{Z}) \setminus (\{D,\ldots,d'-1\}+d' \mathbb Z^{+}_0).$$

$\bullet$ \underline{Case when $D'>-d$}

i) if $x \in \{-d,\ldots,D'-1\}+d\mathbb Z^{-}_0$ then $x$ is transient and its irreducible class is $$\mathcal{I}(x)=(x+d\mathbb{Z}) \,\cap \,]-\infty,D'-1];$$

ii) otherwise, $x$ is essential and its essential class is given by
$$\mathcal{I}(x)=(r_x+\delta \mathbb{Z}) \setminus (\{-d,\ldots,D'-1\}+d\mathbb Z^{-}_0).$$  
$\bullet$ \underline{Case when $D \geq d'$ and $D' \leq -d$} 

There is/are $\delta$ irreducible class(es), which is/are all essential:
$$r+\delta\mathbb Z \text{ with } 0\leq r <\delta.$$
\end{theo}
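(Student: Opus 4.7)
The plan is to adapt the inductive argument on the jump set $T$ of Step 2 of the proof of Theorem \ref{irreducibleclass} to the bidirectional setting (in which $S_\mu$ and $S_{\mu'}$ each contain both positive and negative elements): the clean $T$-essential classes of the form $\{s',\ldots,s-1\}\cap(r+\delta_T\mathbb Z)$ must be replaced by \emph{punctured} classes obtained by removing the transient zone $R$ (or its dual) appearing in the statement. Two observations organize the whole argument. First, every jump of $\mathcal X^{(0)}$ lies either in $d\mathbb Z$ or in $d'\mathbb Z$, hence in $\delta\mathbb Z$; so the chain remains in $r_x+\delta\mathbb Z$ and $\mathcal I(x)\subset r_x+\delta\mathbb Z$. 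Second (a \emph{crossing lemma}), any $\mu$-jump leaving $\mathbb Z^-$ must land in $\{0,\ldots,D-1\}$, and dually any $\mu'$-jump leaving $\mathbb Z_0^+$ must land in $\{D',\ldots,-1\}$.

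For Case 1 ($D<d'$), I would introduce $R=\{D,\ldots,d'-1\}+d'\mathbb Z_0^+$, i.e.\ the points of $\mathbb Z_0^+$ whose residue modulo $d'$ lies in $\{D,\ldots,d'-1\}$. By the crossing lemma, re-entries from $\mathbb Z^-$ into $\mathbb Z_0^+$ have residue modulo $d'$ in $\{0,\ldots,D-1\}$ and so avoid $R$; and any $\mu'$-jump from $\mathbb Z_0^+\setminus R$ preserves the $d'$-residue (as $S_{\mu'}\subset d'\mathbb Z$) and therefore also stays outside $R$. Hence $R$ is unreachable from $\mathbb Z\setminus R$, which simultaneously gives the closure of $(r_x+\delta\mathbb Z)\setminus R$ when $x\notin R$ and the inclusion $\mathcal I(x)\subset (x+d'\mathbb Z)\cap\mathbb Z_0^+=(x+d'\mathbb Z)\cap[D,+\infty[$ when $x\in R$. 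The reverse inclusion comes from the irreducibility of the $\mu'$-random walk on the $d'$-coset $x+d'\mathbb Z$: any two points of $(x+d'\mathbb Z)\cap[D,+\infty[$ can be joined by a $\mu'$-path that first climbs arbitrarily high using $S_{\mu'}^+$ and then descends, staying entirely in $\mathbb Z_0^+$. Transience of $x\in R$ then follows from the positive probability of reaching $\mathbb Z^-$ in finitely many steps combined with the impossibility of returning to $R$. The description $\mathcal I(x)=(r_x+\delta\mathbb Z)\setminus R$ for $x\notin R$ is established by a Step-2-type induction refined by identifying the reachable positive re-entry residues modulo $d'$ with the complement of $R$ inside $r_x+\delta\mathbb Z$.

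Case 2 ($D'>-d$) is treated by the strictly parallel argument on the negative side, with transient zone $R'=\{-d,\ldots,D'-1\}+d\mathbb Z_0^-$ and re-entry residues modulo $d$ in $\{D',\ldots,-1\}$. For Case 3 ($D\geq d'$ and $D'\leq -d$), no transient zone can appear: the interval $\{0,\ldots,D-1\}$ has length at least $d'$, so $\{0,\ldots,D-1\}\cap(r_x+\delta\mathbb Z)$ contains at least $d'/\delta$ consecutive terms of the arithmetic progression of step $\delta$ and therefore hits all $d'/\delta$ residue classes modulo $d'$ that are represented in $r_x+\delta\mathbb Z$; dually $\{D',\ldots,-1\}$ hits every residue class modulo $d$ represented there. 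Combining these counts with the irreducibility of the $\mu$-walk on each $d$-coset and of the $\mu'$-walk on each $d'$-coset yields full reachability $\mathcal I(x)=r_x+\delta\mathbb Z$, with all $\delta$ cosets essential.

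The principal technical obstacle is the combinatorial identification of $R$ (resp.\ $R'$) as \emph{exactly} the complement of the reachable re-entry residues inside the $\delta$-coset, together with the verification that this description is stable under the Step-2 induction when a new jump is added to $T$. A secondary delicate point is to secure a positive probability of reaching $\mathbb Z^-$ from a far-away $x\in R$ when $\mu'$ has bounded negative support: this requires chaining finitely many copies of the most-negative admissible $\mu'$-jump, which happens with positive probability by the Markov property and suffices to conclude transience.
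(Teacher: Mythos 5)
Your proposal follows essentially the same route as the paper's proof: the unreachability of the transient zone $R=\{D,\ldots,d'-1\}+d'\mathbb Z^{+}_0$ via the re-entry window $\{0,\ldots,D-1\}$ together with the invariance of the $d'$-residue under $\mu'$-jumps, the positive-elements-first ordering of a semigroup decomposition of $kd'$ to connect each $d'$-coset inside $[D,+\infty[$, and the reduction of the essential classes to Theorem \ref{irreducibleclass} combined with the semigroups $d\mathbb Z$ and $d'\mathbb Z$. The paper expresses the unreachability of $R$ through the last entrance time into $\{0,\ldots,D-1\}$ before hitting $x$, but that is exactly the residue obstruction you invoke.
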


\begin{proof}
Before delving into details, we shall pay more attention to the fact that these two conditions cannot be attained at the same time due to $-D<d'<D'<d$ and we thus arrive at a contradiction. In particular, if $D<d'$ then $-D' \geq d$ and vice versa.
\newline
$\bullet$ \underline{Case when $D'<d$}  \\
i) The converse is easily done since $x\ {\stackrel{S_\mu \cup  S_{\mu'}}{\longleftrightarrow}}\ x+kd'$ for any $k \in \mathbb{Z}$ satisfying $x+kd' \geq D$. Indeed, the assumption of $S_{\mu'}$ leads to the semi-group generated by $S_{\mu'}$, says  $T_{\mu'}$, is equal to $d' \mathbb{Z}$. One can write $kd'=\displaystyle \sum s'_{i}$ as the finite sum of elements in $S_{\mu'}$. Selecting first the positive elements (if any) and then the negative ones, it immediately follows that $(x+d'\mathbb{Z}) \cap  [D,+\infty[\, \subset C(x)$.  
Now, we will show  by contraposition that $C(x) \subset (x+d'\mathbb{Z}) \cap  [D,+\infty[$.  Suppose $z \in ]-\infty,D-1] \cap C(x)$.
Let $\tau$ be the last time entering to the set $\{0,\ldots,D -1\}$ of $\mathcal{X}^{(0)}$ before visiting $x$ for the first time. Since $z\ {\stackrel{S_\mu \cup S_{\mu'}}{\longrightarrow}}\ x$, we have $\mathbb{P}_{z}[\tau < +\infty]=1$ and then put $X_\tau^{(0)}=y \in \{0,\ldots,D-1\}$. Observe that $x-y \in \{1,\ldots,d'-1\} +d'\mathbb Z^{+}_0$, i.e. $x-y \notin d'\mathbb{N}$ and thus, $z\ {\stackrel{S_\mu \cup S_{\mu'}}{\not\longrightarrow}}\ x$ (contradiction). When $z \geq D$, the crossing process starting at $z$ is directed only by $S_{\mu'}$ and therefore, $z \in x+ d' \mathbb Z$. \\
ii) A reasoning similar to the above yields that
 \begin{align*}
      \ y\ {\stackrel{S_{\mu} \cup S_{\mu'}}{\longleftrightarrow}} \ y+kd' \ \text{ \rm if } y\in \{0,\ldots,D-1\}+d' \mathbb Z^{+}_0, 
 \end{align*}
 where $k \in \mathbb Z \text{ s.t. } y+kd' \geq 0$.   \\
 Moreover, by Theorem \ref{irreducibleclass}
 \begin{align}\label{abc}
   \{D',\ldots,D-1\}\cap (r_x+\delta \mathbb{Z}) \ {\stackrel{S_{\mu}^{+} \cup S_{\mu'}^{-}}{\longrightarrow}} \{D',\ldots,D-1\}\cap (r_x+\delta \mathbb{Z}).  
 \end{align}
  
Now, we fixed $t \in ]-\infty,D'[ \,\cap \,(r_x+\delta \mathbb Z)$, then there is some $z_t \in \{D',\dotsi,-1\} \cap (r_x+\delta \mathbb Z)$ s.t. $z_t \equiv t $(mod $d$) due to $-D' \geq d$. Combining \eqref{abc} with the fact that $T_\mu=d \mathbb Z$, we get 
\begin{align*} 
   \{t\} \ {\stackrel{S_{\mu} \cup S_{\mu'}}{\longleftrightarrow}} \{D',\dotsi,D-1\}\cap (r_x+\delta \mathbb{Z}).  
 \end{align*}
 This property holds for every choice of $t$, so
 \begin{align*} 
   \{ ]-\infty,D'[ \,\cap \,(r_x+\delta \mathbb Z)\} \ {\stackrel{S_{\mu} \cup S_{\mu'}}{\longleftrightarrow}} \{D',\dotsi,D-1\}\cap (r_x+\delta \mathbb{Z}) 
 \end{align*}
 which achieves the proof.\\
 $\bullet$ We finish by mentioning that, suitably modified, the above argument applies
to the other cases.
\end{proof}

\begin{remark}{\label{ess_general} Suppose that $S_\mu$ is unbounded from above $(D=+\infty)$ and $S_{\mu'}$ is unbounded from below $(D'=-\infty)$. In such situation, all states connect and it is in fact possible that the process $\mathcal{X}^{(0)}$ with a positive probability will never reach a given neighborhood of $0$ due to infinitely many extremely large jumps across $0$. Consequently, there exists (with a positive probability) an orbit between every two points, which has no intermediary state belonging to a given finite set $F$, even if $0 \notin F$ (see \cite{Kemperman}). }
\end{remark}
\subsection{  On the invariant measure for $ \mathcal{X}^{(0)}$ when  $S_\mu \subset \mathbb Z^+$ and $S_{\mu'} \subset \mathbb Z^-$}

The concept of invariant measure plays a crucial role in the study in the long-time behaviour and asymptotic properties of a Markov chain.  To adapt for the current situation, a proper adjustment to the invariant measure in \cite{MijaVysot} is indispensable and what we found is the following 
\begin{lem}   { Assume $S_\mu \subset \mathbb Z^+$ and $S_{\mu'} \subset \mathbb Z^-$}. The measure $\lambda$ on $\mathbb{R}$ given by
$$\lambda(dx) = \left(\mathbb{1}_{]-\infty,0[}(x)\, \mathbb{P}[\xi_{1}' < x] + \mathbb{1}_{[0,+\infty[}(x)\, \mathbb{P}[\xi_{1} > x] \right)dx,$$
where $dx$ is Lebesgue measure, is invariant  for $\mathcal{X}^{(0)}$.
\end{lem}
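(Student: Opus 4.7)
My approach is to verify the dual characterization of invariance, namely $\int Pf\,d\lambda = \int f\,d\lambda$ for every bounded Borel $f:\mathbb{R}\to\mathbb{R}$, where $P$ is the transition kernel of $\mathcal{X}^{(0)}$ naturally extended to $\mathbb{R}$ by $Pf(x)=\mathbb E[f(x+\xi_1)]$ for $x<0$ and $Pf(x)=\mathbb E[f(x+\xi_1')]$ for $x\geq 0$ (this matches the original dynamics on $\mathbb{Z}$ at $\alpha=0$, since $\eta_1\sim\mu'$). Because $\lambda$ is $\sigma$-finite, this dual identity is equivalent to invariance in the usual sense.

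The computation is a double Fubini. Splitting according to the sign of the integration variable gives
\begin{align*}
\int Pf\,d\lambda = \underbrace{\int_{-\infty}^{0}\mathbb P[\xi_1'<x]\,\mathbb E[f(x+\xi_1)]\,dx}_{I_1} + \underbrace{\int_{0}^{+\infty}\mathbb P[\xi_1>x]\,\mathbb E[f(x+\xi_1')]\,dx}_{I_2}.
\end{align*}
Writing the tail probabilities as expectations of indicators, using independence of $\xi_1$ and $\xi_1'$, and then substituting $u=x+\xi_1$ in $I_1$ and $u=x+\xi_1'$ in $I_2$ produces
\begin{align*}
I_1 = \mathbb E\Bigl[\int_{\xi_1+\xi_1'}^{\xi_1} f(u)\,du\Bigr],\qquad I_2 = \mathbb E\Bigl[\int_{\xi_1'}^{\xi_1+\xi_1'} f(u)\,du\Bigr].
\end{align*}
The key observation is that, irrespective of the sign of the break-point $\xi_1+\xi_1'$, the intervals $[\xi_1',\xi_1+\xi_1']$ and $[\xi_1+\xi_1',\xi_1]$ concatenate exactly to $[\xi_1',\xi_1]$, yielding $I_1+I_2=\mathbb E\bigl[\int_{\xi_1'}^{\xi_1} f(u)\,du\bigr]$.

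On the other side, Fubini applied directly to $\int f\,d\lambda$ gives
\begin{align*}
\int f\,d\lambda = \mathbb E\Bigl[\int_{\xi_1'}^{0} f(u)\,du\Bigr] + \mathbb E\Bigl[\int_{0}^{\xi_1} f(u)\,du\Bigr] = \mathbb E\Bigl[\int_{\xi_1'}^{\xi_1} f(u)\,du\Bigr],
\end{align*}
and the two expressions coincide, which concludes the invariance. I would then remark that the integrability needed for Fubini is immediate for $f$ bounded with compact support (all expectations are finite because $\xi_1-\xi_1'$ is almost surely finite), and a standard monotone-class/truncation argument extends the identity to all bounded Borel $f$.

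The substance of the proof is essentially algebraic: the density of $\lambda$ has been deliberately engineered so that the two pieces $I_1$ and $I_2$ glue together along the moving break-point $\xi_1+\xi_1'$ to reconstruct the full integral over $[\xi_1',\xi_1]$. I do not anticipate a real obstacle; the only mildly delicate points are the sign-case verification of the concatenation identity (two lines) and the careful bookkeeping of the change of variables. The proof does not require any tail assumption on $\mu,\mu'$, consistent with the fact that $\lambda$ is only claimed to be an invariant (Radon) measure, not a finite one.
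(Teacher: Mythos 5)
Your proof is correct, and it verifies the same identity as the paper, but the bookkeeping is genuinely different and worth contrasting. The paper computes $\lambda P\,]a,+\infty[$ directly for $a\geq 0$ (and dually $\lambda P\,]-\infty,-a[$): after the change of variables it is left with a boundary term $\int_{N}^{N+a}\mathbb{P}[\xi_1'<a-y]\,\mathbb{P}[\xi_1>y]\,dy$ coming from the truncation at level $N$, which it must show tends to $0$; this is needed because the half-lines may have infinite $\lambda$-measure. You instead test the dual identity $\int Pf\,d\lambda=\int f\,d\lambda$ against compactly supported $f$ and organize the computation probabilistically via Fubini, reducing everything to the concatenation $[\xi_1',\xi_1+\xi_1']\cup[\xi_1+\xi_1',\xi_1]=[\xi_1',\xi_1]$ (valid here precisely because $\xi_1>0>\xi_1'$ forces the break-point to lie between the endpoints). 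This sidesteps the limit argument entirely and is arguably cleaner; it also avoids a small fragility in the paper's presentation, since a $\pi$-system uniqueness argument on half-lines is delicate when $\mu$ or $\mu'$ has infinite mean and all the test sets have infinite measure. The one phrase I would tighten is the final extension ``to all bounded Borel $f$'': for a possibly infinite $\lambda$ the right statement is that the identity holds for all \emph{nonnegative} measurable $f$ by Tonelli (values in $[0,+\infty]$ allowed), which is exactly what invariance requires; no truncation or monotone-class step is actually needed.
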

\begin{proof}
 For any $a \geq 0$, it follows without difficulty that
\begin{align*}
    \lambda P ]a, +\infty[ &= \displaystyle \lim_{N \to +\infty} \displaystyle \int_{-N}^{0} \mathbb{P}[\xi'_{1} <x] \, \mathbb{P}[x+\xi_{1} >a] \, dx + \displaystyle \lim_{N \to +\infty} \displaystyle \int_{0}^{N} \mathbb{P}[\xi_1>x] \, \mathbb{P}[ x+\xi'_{1} > a] \, dx \\
     &=  \displaystyle \lim_{N \to +\infty} \displaystyle \int_{-N}^{0} \mathbb{P}[\xi'_{1} <x] \, \mathbb{P}[\xi_{1} >a-x] \, dx + \displaystyle \lim_{N \to +\infty} \displaystyle \int_{0}^{N} \mathbb{P}[\xi_1>x] \, \mathbb{P}[\xi'_{1} > a-x] \, dx \\
      &= \displaystyle \lim_{N \to +\infty} \displaystyle \int_{a}^{N+a} \mathbb{P}[\xi'_{1} <a-y] \, \mathbb{P}[\xi_{1} >y] \, dy + \displaystyle \lim_{N \to +\infty} \displaystyle \int_{a}^{N} \mathbb{P}[\xi_1>y] \, \mathbb{P}[\xi'_{1} > a-y] \, dy \\
      &= \displaystyle \lim_{N \to +\infty}\displaystyle \int_{a}^{N} \mathbb{P}[ \xi_{1} > y] \, dy+ \displaystyle \lim_{N \to +\infty}\displaystyle \int_{N}^{N+a}\mathbb{P}[\xi'_{1} <a-y] \, \mathbb{P}[\xi_{1} >y] \, dy\\
      &=\lambda ]a, +\infty[ 
\end{align*}
since $\displaystyle \int_{N}^{N+a}\mathbb{P}[\xi'_{1} <a-y] \, \mathbb{P}[\xi_{1} >y] \, dy \leq \displaystyle \int_{0}^{a} \mathbb{P}[\xi_1 > z+N]\,dz  \rightarrow 0$ as $N \rightarrow +\infty$.\\
The same computation yields $\lambda P ]-\infty, -a[ = \lambda ]-\infty, -a[ $ and thus, the proof is complete.
\end{proof}
As a direct consequence, we obtain the following result.
\begin{coro} \label{positive-recurrent}
Assume $S_\mu \subset \mathbb Z^+$ and $S_{\mu'} \subset \mathbb Z^-$. Then $\mathcal{X}^{(0)}$ is positive recurrent on each essential class iff both measures $\mu$ and $\mu'$ have finite first moment.
\end{coro}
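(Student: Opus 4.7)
The plan is to derive from the continuous invariant measure $\lambda$ of the preceding Lemma a discrete invariant measure $\nu$ on $\mathbb{Z}$, whose total mass is precisely $\mathbb{E}[\xi_1]+\mathbb{E}[-\xi_1']$. Since the jumps $\xi_1,\xi_1'$ are integer-valued, the density of $\lambda$ with respect to Lebesgue measure is constant on each interval $[n,n+1)$, which motivates the definition
$$\nu(\{n\}):=\lambda([n,n+1))=\begin{cases}\mathbb{P}[\xi_1>n]&\text{if } n\geq 0,\\ \mathbb{P}[\xi_1'\leq n]&\text{if } n<0.\end{cases}$$

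First I would verify that $\nu$ is invariant for the discrete kernel $P$ of $\mathcal{X}^{(0)}$; the cleanest route is a direct computation of $\sum_x \nu(\{x\})P(x,y)$ that mirrors the cancellation in the Lemma, with sums in place of integrals. Next, I would compute the total mass by the standard identity for non-negative integer-valued random variables:
$$\nu(\mathbb{Z})=\sum_{n\geq 0}\mathbb{P}[\xi_1>n]+\sum_{n\geq 1}\mathbb{P}[-\xi_1'\geq n]=\mathbb{E}[\xi_1]+\mathbb{E}[-\xi_1'].$$
The definition also shows that $\nu(\{n\})>0$ iff $D'\leq n\leq D-1$, so by Theorem \ref{irreducibleclass} the support of $\nu$ is exactly the union of the $\delta$ essential classes, and $\nu$ vanishes on every transient state. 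Since each essential class $\mathcal{E}$ is closed and no state outside $\mathcal{E}$ with positive $\nu$-mass can transition into $\mathcal{E}$, the restriction $\nu|_{\mathcal{E}}$ is a strictly positive invariant measure for the chain on $\mathcal{E}$.

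To finish, I would invoke the classical dichotomy: an irreducible Markov chain on a countable state space is positive recurrent iff it admits a finite invariant measure, in which case such an invariant measure is unique up to a multiplicative constant. Applied to each of the finitely many essential classes, this yields that $\mathcal{X}^{(0)}$ is positive recurrent on every essential class iff $\nu|_{\mathcal{E}}$ is finite for every $\mathcal{E}$ iff $\nu(\mathbb{Z})<+\infty$, which by the mass computation above is equivalent to $\mathbb{E}[\xi_1]+\mathbb{E}[-\xi_1']<+\infty$. The main technical obstacle is the direct verification that the discretized $\nu$ is invariant for $P$: the preceding Lemma establishes invariance of $\lambda$ on $\mathbb{R}$ only after extending the kernel to all real starting points, and the passage to integer starting points has to be made by hand --- or, equivalently, one may bypass $\lambda$ altogether and prove $\sum_x \nu(\{x\})P(x,\{y\})=\nu(\{y\})$ directly, using the same Fubini-type manipulation that appears in the proof of the Lemma.
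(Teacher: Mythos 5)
Your proposal is correct and follows essentially the route the paper intends: the paper states this corollary as a ``direct consequence'' of the continuous invariant measure $\lambda$, and the very next lemma establishes exactly the discretized measure $\nu(n)=\mu']-\infty,n]$ for $n\leq -1$, $\nu(n)=\mu[n+1,+\infty[$ for $n\geq 0$ that you construct (via a quotient-kernel argument rather than a direct Fubini computation), after which the total-mass identity $\nu(\mathbb Z)=\mathbb E[\xi_1]+\mathbb E[-\xi_1']$ and the classical finite-invariant-measure criterion give the equivalence just as you argue.
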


\begin{lem}  Assume $S_\mu \subset \mathbb Z^+$ and $S_{\mu'} \subset \mathbb Z^-$. The discrete measure $\nu$ on $\mathbb{Z}$ given by
\begin{align}\label{discrete}
\nu(m):= \left\{
\begin{array}{lll}
   \mu']-\infty,m] &{\rm if} &m \leq -1\\ 
   \mu [m+1, +\infty[ &{\rm if}  &m\geq 0.  
\end{array}
\right.
\end{align}  
is invariant for the homogeneous random walk $\mathcal X^{(0)}$. Moreover, for arbitrary $x_0 \in \mathbb{Z}$, it induces a corresponding invariant measure $\nu_{x_0}$ on $\mathcal{I}(r_{x_0})$ by its restriction on this essential class (in other words, if $A \subset  \mathcal{I}(r_{x_0})$ then $\nu_{x_0}(A)= \displaystyle \sum_{x \in A}\nu(x)$).
\end{lem}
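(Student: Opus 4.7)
The plan is to verify $\nu P = \nu$ pointwise on $\mathbb{Z}$ and then observe that the restriction to any essential class is automatic.

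\textbf{Step 1: direct invariance.} Fix $n \geq 0$ (the case $n \leq -1$ is dual). The transitions of $\mathcal{X}^{(0)}$ read $P(m, n) = \mu(n - m)$ for $m \leq -1$, $P(0, n) = \mu'(n)$, and $P(m, n) = \mu'(n - m)$ for $m \geq 1$. Since $S_{\mu'} \subset \mathbb{Z}^-$, the state $m = 0$ contributes nothing to $\nu P(n)$ when $n \geq 0$. After the substitutions $j = n - m$ in the first piece and $k = m - n$ in the second, one obtains
$$
\nu P(n) \;=\; \sum_{j \geq 1} F'(-j)\, \mu(n+j) \;+\; \sum_{k \geq 1} \mu'(-k)\, F(n+k+1),
$$
writing $F(i) := \mu[i,+\infty[$ and $F'(i) := \mu']-\infty, i]$. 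Expanding $F(n+k+1) = \sum_{j \geq 1} \mu(n+k+j)$ in the second sum and swapping the order of summation via $i = k+j$ recasts it as $\sum_{i \geq 2} \mu(n+i)\bigl(1 - F'(-i)\bigr)$. Combining the two sums and using the boundary identity $F'(-1) = 1$ (which encodes $S_{\mu'} \subset \mathbb{Z}^-$), everything collapses telescopically to $\sum_{j \geq 1}\mu(n+j) = F(n+1) = \nu(n)$. Conceptually this is the discrete shadow of the previous lemma: since $\xi_1, \xi_1'$ are integer-valued, the density of $\lambda$ is piecewise constant on unit intervals with $\lambda([m, m+1)) = \nu(m)$, which explains why the same bookkeeping succeeds.

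\textbf{Step 2: transient states carry zero $\nu$-mass.} When $D < +\infty$ and $x \geq D$, one has $\nu(x) = \mu[x+1, +\infty[\,=\,0$ because $D = \max S_\mu$; dually $\nu(x) = 0$ whenever $x < D'$ and $D' > -\infty$. Hence $\nu$ is supported on the union of essential classes described in Theorem \ref{irreducibleclass}.

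\textbf{Step 3: restriction to $\mathcal{I}(r_{x_0})$.} Set $C := \mathcal{I}(r_{x_0})$ and fix $y \in C$. Then
$$
\nu(y) \;=\; \nu P(y) \;=\; \sum_{x \in C} \nu(x) P(x, y) \;+\; \sum_{x \notin C} \nu(x) P(x, y).
$$
If $x$ is essential but lies in a different class, then $P(x, y) = 0$ by the mod-$\delta$ structure of Theorem \ref{irreducibleclass}; if $x$ is transient, then $\nu(x) = 0$ by Step 2. Thus the second sum vanishes and $\nu_{x_0}$ is invariant for the chain on $C$.

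The main obstacle is the index bookkeeping in Step 1: the state $m = 0$ (where $\eta_n$ has law $\mu'$ because $\alpha = 0$) must not be double-counted, and the boundary identity $F'(-1) = 1$ is exactly what makes the two telescoped sums glue into the tail $F(n+1)$. Once Step 2 is noted, the restriction argument is essentially formal.
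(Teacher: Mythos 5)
Your proof is correct, but it takes a genuinely different route from the paper's. The paper obtains $\nu$ by projection: it starts from the continuous invariant measure $\lambda$ of the preceding lemma, partitions $\mathbb{R}$ into the cells $C_j=[j,j+1[$, checks the compatibility condition that $P(x,C_j)$ depends only on the cell containing $x$, and invokes the general principle (in the spirit of Knight) that the pushforward of an invariant measure under such a quotient is invariant for the induced chain on $\mathbb{Z}$; the identity $\nu(m)=\lambda(C_m)$ then yields the stated formula. You instead verify $\nu P=\nu$ by a direct discrete computation; your telescoping bookkeeping is sound (I checked the case $n\geq 0$: the $m=0$ term vanishes because $S_{\mu'}\subset\mathbb{Z}^-$, the boundary identity $\mu'(\mathbb{Z}^-)=1$ glues the two reindexed sums into the tail $\mu[n+1,+\infty[$, and the case $n\leq -1$ is strictly dual). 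What each approach buys: the paper's argument explains where the formula comes from and recycles the continuous lemma with essentially no further computation, whereas yours is self-contained and, more importantly, makes explicit the second assertion of the lemma --- that the restriction of $\nu$ to an essential class is itself invariant --- which the paper leaves implicit. Your Steps 2 and 3 (transient states $x\geq D$ or $x<D'$ carry zero $\nu$-mass, and essential classes are closed by Theorem \ref{irreducibleclass}, so all cross-class contributions to $\nu P(y)$ vanish) supply precisely the justification that part of the statement needs.
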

\begin{proof}
At first, we briefly outline the idea of the result. Let $X$ be a measurable space that is compatible with the $\text{Borel } \sigma-$algebra $\mathcal{B}$.  Suppose we have successfully founded an invariant measure $\lambda$ on  $(X,\mathcal{B})$ with the corresponding transition operator $P$. Let $\sim$ be an equivalence relation on $X$ and denote by $\overset{\sim}{X}:=X_{/_{\sim}}$ the quotient space of $X$ (whose equivalence classes belong to $\mathcal{B}$) under this relation. We assume $\overset{\sim}{X}$ is countable and holds for any elements $C_i, C_j \in \overset{\sim}{X}$, 
\begin{align}\label{quotientproperty}
    P(x,C_j)=P(y,C_j)  \text{ \rm for any }x,y \in C_i. \tag{*}
\end{align} 

Then the kernel $P$ induces a Markov transition $\overset{\sim}{P}$ on $\overset{\sim}{X}$ s.t. $\overset{\sim}{P}(C_i,C_j):=P(x,C_j)$ with $x \in C_i$; furthermore, the measure $\overset{\sim}{\lambda}$ on $\overset{\sim}{X}$ defined by $\overset{\sim}{\lambda}(C_i):=\lambda{(C_i)}$ is $\overset{\sim}{P}$- invariant. Indeed, for any $C' \in \overset{\sim}{X}$
\begin{align*}
    \overset{\sim}{\lambda}\overset{\sim}{P}(C')&= \displaystyle \sum_{C}\overset{\sim}{\lambda}(C)\overset{\sim}{P}(C,C')\\
    &=\displaystyle \sum_{C} \displaystyle \int_{C}P(x,C')\lambda(dx)\\
    &=\displaystyle \int_{X}P(x,C')\lambda(dx)\\
    &=\overset{\sim}{\lambda}(C'). 
\end{align*}

Let us now explain how to apply this general principle to get the exact formula of the invariant measure for the oscillating random walk. Consider the following equivalence relation 
$$x\sim y \Longleftrightarrow \exists n \in \mathbb{Z} \text{ s.t. }x,y \in [n,n+1[,$$
which apparently satisfies the condition \eqref{quotientproperty}. Taking $X=\mathbb R$ and $C_j:=[j,j+1[$ for any $j\in \mathbb Z$, one admits $\nu(m):=\lambda(C_m)$ (compare \eqref{discrete}) as the discrete invariant measure of $\mathcal{X}^{(0)}$.

\end{proof}
 
\begin{remark} When the state $0$ is supposed to be merged to the negative side $(\alpha=1)$, we replace $C_j$ by $C_j^*=]j-1,j]$ and the resulting invariant measure $\nu^*$ is given by
\[
\nu^*(m):= \left\{
\begin{array}{lll}
   \mu']-\infty,m-1] &{\rm if} &m \leq 0\\ 
   \mu [m, +\infty[ &{\rm if}  &m\geq 1.  
\end{array}
\right.
\]  
\end{remark}

\section{  On the crossing sub-process of $ \mathcal{X}^{(0)}$ when $S_\mu\subset \mathbb Z^+$ and $S_{\mu'}\subset \mathbb Z^-$}  
In this section, we would like to study the recurrence and the invariant measure of the embedding process of $\mathcal{X}^{(0)}$, which contains only states at the crossing times. For convenience, let us define the random variables $(S_n)_{n \geq 0}$ and $(S'_n)_{n \geq 0}$, the simple random walks associated with laws $\mu$ and $\mu'$ respectively by $S_0=S_0'=0$ and for $n\geq 1$,
   $$S_{n} = \xi_{1} + \xi_{2} + \dots + \xi_{n},$$
  and
   $$S'_{n} = \xi'_{1} + \xi'_{2} + \dots + \xi'_{n}.$$
  
   Denote by $\mu^{*n}$ the  $n$-fold convolution of $\mu$ with itself (also the distribution of $S_n$) and $\mathcal{U} = \displaystyle \sum_{n \geq 0}\mu^{*n}$ its potential kernel; similarly for $\mu'^{*n}$ and $\mathcal{U'}$. 
 Now we consider the sequence of crossing times ${\bf C}=(C_k)_{k \geq 0}$ at which the process changes its sign whenever crossing $0$. Assume that $C_0=0$ and we designate $C_k$ as the time of $k^{th}$- crossing given by 
\begin{align}\label{crossing_time}
    C_{k+1} := \inf \{n > C_k: X_{C_k}^{(0)} + (\xi_{C_k + 1} + \xi_{C_k + 2} + \dotsi +\xi_{n}) \geq 0 \text{ \rm if }  X_{C_k}^{(0)} \leq -1 \\ 
    \text{or } X_{C_k}^{(0)} + (\xi'_{C_k + 1} + \xi'_{C_k + 2} + \dotsi + \xi'_{n}) < 0 \text{ \rm if } X_{C_k}^{(0)} \geq 0 \}   \nonumber
\end{align}
This forms a sequence of stopping times with respect to the filtration $\mathbb{F}:=(F_n)_{n \geq 0}$ where $F_n :=\sigma \left(\xi_{k}, \xi'_{k}\mid k \leq n \right)$. By the law of large number, one gets $S_{n} \to +\infty$ and $S'_{n} \to -\infty$ $\mathbb{P}$-almost surely and thus, $\mathbb{P}_{x}[C_k <+\infty]=1$ for all $x \in \mathbb{Z}$.    
    
\begin{lem}  { Assume $S_\mu \subset \mathbb Z^+$ and $S_{\mu'} \subset \mathbb Z^-$.} The sub-process $(X_{C_k}^{(0)})_{k \geq 0}$ is a (time-homogeneous) Markov chain on $\mathbb{Z}$ with its transition kernel determined by 
  \begin{align}\label{matrixC} 
C(x,y)=\left\{
\begin{array}{lll}
\displaystyle \sum_{t = 0}^{-x - 1} \mu(y - x -t)\,\mathcal{U}(t) &\text{\rm if } x<0 \text{ \rm and } y\geq 0,\\   
\displaystyle \sum_{t= -x}^{0} \mu'(y - x - t) \, \mathcal{U'}(t) &\text{\rm if } x\geq 0 \text{ \rm and } y<0,\\
0 &{\rm otherwise}.
\end{array}
\right. 
 \end{align}    
 \end{lem}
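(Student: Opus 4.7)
My plan is to reduce the statement to a single-step computation via the strong Markov property and then exploit the one-sided support of $\mu$ and $\mu'$. Since each $C_k$ is a stopping time for $\mathbb F$ and the sequence $(\xi_n,\xi'_n)_{n\geq 1}$ is i.i.d., the strong Markov property of $\mathcal X^{(0)}$ applied at $C_k$ gives that, conditionally on $X_{C_k}^{(0)} = x$, the post-$C_k$ trajectory has the same law as $\mathcal X^{(0)}$ started from $x$. This simultaneously yields the Markov property and the time-homogeneity of $(X_{C_k}^{(0)})_{k\geq 0}$, and reduces the problem to computing $C(x,y) = \mathbb P_x[X_{C_1}^{(0)} = y]$.

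I would treat first the case $x<0$ and $y\geq 0$. As long as the chain is strictly negative, its dynamics is governed solely by $\mu$, so $X_n^{(0)} = x + S_n$ for every $n\leq C_1$ and $C_1 = \inf\{n\geq 1 : x+S_n \geq 0\}$. Decomposing the event $\{X_{C_1}^{(0)} = y\}$ according to the crossing time $n$ and the last pre-crossing value $t := S_{n-1}$, and using the independence of $S_{n-1}$ and $\xi_n$, one gets
$$C(x,y) = \sum_{n\geq 1}\mathbb P[x+S_{n-1} < 0,\ x+S_n = y] = \sum_{t=0}^{-x-1}\mu(y-x-t)\sum_{n\geq 1}\mathbb P[S_{n-1} = t].$$
The range $0\leq t\leq -x-1$ records that the walk is still negative at step $n-1$. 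Recognising the inner sum as $\sum_{m\geq 0}\mu^{*m}(t) = \mathcal U(t)$ then delivers the claimed formula. The case $x\geq 0$, $y<0$ is analogous with $\mu'$ and the strictly decreasing walk $(S'_n)$: the constraint $x+S'_{n-1}\geq 0$ now forces $t\in\{-x,\ldots,0\}$, and the same manipulations give $C(x,y) = \sum_{t=-x}^{0}\mu'(y-x-t)\,\mathcal U'(t)$. The remaining two cases (same sign for $x$ and $y$) trivially give $C(x,y)=0$, since a crossing demands a change of sign.

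There is no serious obstacle; the argument is essentially bookkeeping. The two points that deserve care are, first, verifying that the bound $t\leq -x-1$ (respectively $t\geq -x$) truly encodes \say{walk has not yet crossed} --- this relies crucially on the one-sided supports $S_\mu\subset\mathbb Z^+$ and $S_{\mu'}\subset\mathbb Z^-$ --- and second, noting that strict monotonicity of $(S_n)$ and $(S'_n)$ ensures each level $t$ is attained by at most one index, so that the unrestricted sum $\sum_{n\geq 1}\mathbb P[S_{n-1} = t]$ coincides with $\mathcal U(t)$ without any double counting.
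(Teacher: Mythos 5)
Your proposal is correct and follows essentially the same route as the paper: decompose $\{X_{C_1}^{(0)}=y\}$ over the crossing index $n$ and the value $t=S_{n-1}$ (using monotonicity of $S$, resp. $S'$, to identify $\{C_1=n, X_n^{(0)}=y\}$ with $\{x+S_{n-1}\leq -1,\ x+S_n=y\}$), then apply independence and resum to recognise the potential kernel $\mathcal U(t)$. The paper simply declares the Markov property "obvious" where you invoke the strong Markov property at the stopping times $C_k$, and otherwise the computations coincide.
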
 
  The  process $\mathcal X^{(0)}_{\bf C}:= (X_{C_k}^{(0)})_{k \geq 0}$ is called the {\bf crossing sub-process}  of $\mathcal X^{(0)}$. 

 \begin{proof} 
 The Markov property is obvious.\\
 If $x < 0$ and $y \geq 0$ (similar to $x\geq 0,\, y<0$) then we have
\begin{align*}
    C(x, y) &= \mathbb{P} [X_{C_1} = y \mid X_{0} = x] \\
    &= \displaystyle \sum_{n=1}^{+\infty} \underbrace{\mathbb{P}[ x + S_{n-1} \leq -1,\, x + S_{n} = y]}_{\text{since }(S_n)_{n\geq 1} \text{ \rm is increasing}}\\
    &= \displaystyle \sum_{n=1}^{+\infty} \displaystyle \sum_{t = 0}^{-x-1} \mathbb{P}[S_{n-1} = t] \, \mathbb{P}[\xi_{n} = y - x - t]\\
    &= \displaystyle \sum_{t =0}^{-x-1} \mathbb{P}[\xi_{1} = y - x - t]\, \displaystyle \sum_{n=1}^{+\infty} \mathbb{P}[S_{n-1} = t]\\
    &= \displaystyle \sum_{t= 0}^{-x-1} \mu(y - x - t) \, \mathcal{U}(t).
\end{align*} 
\end{proof}   

\subsection{Irreducible classes  of $\mathcal X_{\bf C}^{(0)}$} 


 In case of  reflected random walk, it is well-known in \cite{MarcWoess} that the full process  and its process of reflections possess the common essential classes. 
Since the reflected random walk is regarded as the anti-symmetric case of our general model in which we identify the points themselves and their mirror images relative to $0$, it comes naturally a question whether this phenomenon possibly occurs. There is no solid information to give an exact answer other than the intuitive relationship $\mathcal{I}_{\bf C}(x) \subset \mathcal{I}(x)$, where $\mathcal{I}_{\bf C}(x)$ represents the irreducible class of $x$ with respect to the crossing sub-process $\mathcal X^{(0)}_{\bf C}$ starting at any given $x\in \mathbb Z$. Thus it is reasonable to attempt, using the below construction, to gain an understanding of the structure of $\mathcal{I}_{\bf C}$. 
 \begin{lem}  Assume $S_\mu \subset \mathbb Z^+$ and $S_{\mu'} \subset \mathbb Z^-$, and,  for any fixed $0 \leq r <\delta$, let us decompose  $\mathcal{I}(r)$ into $\mathcal{I}^{+}(r)\cup \mathcal{I}^{-}(r)$ where  $\mathcal{I}^{+}(r):= \mathcal{I}(r) \cap \mathbb Z_0^+$ and $\mathcal{I}^{-}(r):= \mathcal{I}(r) \cap \mathbb Z^-$. Set 
\begin{align}\label{plusorbit}
    \mathcal{I}_{\bf C}^+(r):= \{y \in \mathcal{I}^+(r): (y-S_{\mu})\cap \mathcal{I}^-(r) \neq \emptyset\},
\end{align}
and
\begin{align} \label{negativeorbit}
    \mathcal{I}_{\bf C} ^-(r):= \{y \in \mathcal{I}^-(r): (y-S_{\mu'})\cap \mathcal{I}^+(r) \neq \emptyset\}.
\end{align} 
Then $\mathcal{I}_{\bf C}(r)=\mathcal{I}_{\bf C}^+(r) \cup \mathcal{I}_{\bf C}^-(r)$ is an essential class of the crossing sub-process $\mathcal X_{\bf C}^{(0)}$. Furthermore, all the $X_{C_k}^{(0)}$ but finitely many belong to $\mathcal{I}_{\bf C}(r_{x_0})$ $\mathbb P$-a.s for any initial point $x_0\in \mathbb Z$.
 \end{lem}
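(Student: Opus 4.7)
The plan is to verify three items: (i) $\mathcal{I}_{\bf C}(r)$ is stable (closed) under the crossing kernel $C$; (ii) $\mathcal{I}_{\bf C}(r)$ is irreducible for $C$; and (iii) for any starting point $x_0 \in \mathbb{Z}$, all but finitely many of the states $X_{C_k}^{(0)}$ belong to $\mathcal{I}_{\bf C}(r_{x_0})$ almost surely. Items (i) and (ii) together identify $\mathcal{I}_{\bf C}(r)$ as an essential class of the crossing sub-process.

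For (i) I would simply read off the explicit form \eqref{matrixC} of $C$. Suppose $x \in \mathcal{I}_{\bf C}^-(r)$ and $C(x,y) > 0$; then $y \geq 0$ and there exists $t \in \{0,1,\dotsc,-x-1\}$ with $\mathcal{U}(t)>0$ and $\mu(y-x-t)>0$. The intermediate point $x+t$ sits in $\mathcal{I}^-(r)$ since it is reached from $x \in \mathcal{I}(r)$ by successive $\mu$-jumps that do not cross zero, and $y-(x+t) \in S_\mu$, so $y \in \mathcal{I}_{\bf C}^+(r)$ by the definition \eqref{plusorbit}. The case $x \in \mathcal{I}_{\bf C}^+(r)$ is handled symmetrically using \eqref{negativeorbit}.

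For (ii) fix $x,y \in \mathcal{I}_{\bf C}(r)$ and, without loss of generality, assume $y \in \mathcal{I}_{\bf C}^+(r)$; pick $w \in \mathcal{I}^-(r)$ with $y-w \in S_\mu$ (such $w$ exists by \eqref{plusorbit}). Theorem \ref{irreducibleclass} says that the full chain $\mathcal{X}^{(0)}$ is irreducible on the essential class $\mathcal{I}(r)$, so there is a finite sequence of full-chain transitions $x = x_0 \to x_1 \to \cdots \to x_n = w$ each of positive probability. Prolonging this path by one more $\mu$-jump of size $y-w$ (of positive probability $\mu(y-w)$) produces a crossing from $w$ to $y$. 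Extracting the sub-sequence of crossings along $x_0,x_1,\dotsc,x_n,y$ gives a realization of the crossing sub-process travelling from $x$ to $y$ in finitely many steps with positive probability, so $C^k(x,y) > 0$ for some $k \geq 1$.

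For (iii), Theorem \ref{irreducibleclass} ensures that $\mathcal{X}^{(0)}$ enters $\mathcal{I}(r_{x_0})$ after finitely many steps $\mathbb{P}$-a.s., so the crossings occurring before that entry form a finite (random) set. Every subsequent crossing takes place by a single jump from a state of opposite sign lying inside $\mathcal{I}(r_{x_0})$, so the crossing state belongs to $\mathcal{I}_{\bf C}(r_{x_0})$ by \eqref{plusorbit} and \eqref{negativeorbit}. I expect the main obstacle to be the bookkeeping in (ii): one must verify that although $w$ is only assumed to lie in $\mathcal{I}^-(r)$ (and need not a priori belong to $\mathcal{I}_{\bf C}^-(r)$), it is still reachable from $x$ via full-chain transitions, and that the last jump to $y$ truly is a crossing because $w < 0 \leq y$, which is precisely what makes $y$ appear as a state of the crossing sub-process rather than merely as a state of the full chain.
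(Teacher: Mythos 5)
Your proposal is correct and follows essentially the same route as the paper: the key step in both is to use the irreducibility of the full chain $\mathcal{X}^{(0)}$ on $\mathcal{I}(r)$ to reach a point $w=y-s$ with $s\in S_\mu$ and $w\in\mathcal{I}^-(r)$ (guaranteed by the definition of $\mathcal{I}_{\bf C}^+(r)$), and then perform one more jump, which is necessarily a crossing, to land on $y$. Your items (i) and (iii) merely spell out what the paper dismisses as "immediate from the definition," and your closing remark about $w$ needing only to lie in $\mathcal{I}^-(r)$ rather than $\mathcal{I}_{\bf C}^-(r)$ is exactly the right observation.
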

 \begin{proof}
  For any $x,y \in \mathcal{I}_{\bf C}(r)$, we write $x \leadsto y$ to indicate that the crossing process $\mathcal{X}^{(0)}$ starting at $x$, reaches $y$ (with a positive probability) at certain crossing time $C_k$. Equivalently, there is such $z \in \mathcal{I}(r)$ and $n \in \mathbb{N}$ that $x \overset{n}{\rightarrow} z \overset{1}{\rightarrow} y$ \footnote{The notation means that $p^{(n)}(x,z)>0$ and $p(z,y)>0$.}, where $y$ and $z$ have the opposite signs.   
   
 The attractive property is immediate from the definition, so it remains to check that $\mathcal{I}_{\bf C}(r)$ is an irreducible class for $\mathcal{X}_{\bf C}^{(0)}$, i.e. $x \leadsto y$  for any given $x,y \in \mathcal{I}_{\bf C}(r)$. Without loss of generality, we suppose $x,y \in \mathcal{I}_{\bf C}^+(r)$. There exists some $s \in S_{\mu}$ and $n\geq 0$ s.t. $y-s \in \mathcal{I}^{-}(r)$ and $p^{(n)}(x,y-s)>0$. Then in a single step from $y-s$, the crossing process $\mathcal{X}^{(0)}$ reaches $y$ with the probability $\mu(s)>0$ at a crossing time as desired.
 \end{proof}
 
\begin{theo} \label{condition} Let $(a_i)_{i\geq 1}$ and $(b_j)_{j\geq 1}$ be strictly increasing sequences of positive integers. Set $S_\mu=(a_i)_{i\geq 1}$ and $S_{\mu'}=(-b_j)_{j\geq 1}$.  For any $0\leq r <\delta$, the structure of $\mathcal{I}_{\bf C}(r)$ and its complement will be completely revealed in view of the following conditions: 
\begin{enumerate}[(i).] 
\item  $D=+\infty \Longrightarrow \mathcal{I}_{\bf C}^-(r)=\mathcal{I}^-(r)$. 
\item  $D'=-\infty \Longrightarrow \mathcal{I}_{\bf C}^+(r)=\mathcal{I}^+(r)$.

 \item If $-D'<+\infty$ and $D=+\infty$ then
 $$\mathcal{I}_{\bf C}^+(r)=\mathcal{I}^+(r) \Longleftrightarrow \sup_{k \geq 1} \{a_k-a_{k-1}\} \leq -D' \text{ with } a_0=0.$$

 \item If $D'=-\infty$ and $D<+\infty$ then
 $$\mathcal{I}_{\bf C}^-(r)=\mathcal{I}^-(r) \Longleftrightarrow \sup_{\ell \geq 1}\{b_{\ell}-b_{\ell-1}\} \leq D \text{ with } b_0=0.$$  
 
\item If $D<+\infty$ and $-D'<+\infty$ then
$$\mathcal{I}_{\bf C}^+(r)=\mathcal{I}^+(r) \Longleftrightarrow \underset{1\leq k \leq m}{\max}\{a_{k}-a_{k-1}\} \leq -D',  $$ 
and 
$$\mathcal{I}_{\bf C}^-(r)=\mathcal{I}^-(r) \Longleftrightarrow \underset{1\leq \ell \leq n}{\max}\{b_{\ell}-b_{\ell-1}\} \leq D, $$
where $a_0=b_0=0; D=a_m$ and $D'=-b_n$ for some $m,n\geq 1$.

\item Set $I^+:=\{k\geq 1 \mid a_k-a_{k-1} > -D'\}$ and $I^-:=\{\ell \geq  1 \mid b_{\ell}-b_{\ell-1}>D\}$ in case (v) is violated. For every choice of $k \in I^+$ and $\ell \in I^-$,  we define
$$\mathcal{N}_k^+(r):=\left\{a_{k-1}+r+\delta s: 0\leq s\leq \dfrac{a_{k}-a_{k-1}+D'}{\delta}-1\right\},$$
and
$$\mathcal{N}_\ell^-(r):=\left\{-b_{\ell-1}+r+\delta s: \dfrac{b_{\ell-1}-b_\ell+D}{\delta}\leq s \leq -1\right\}.$$
Then $\mathcal{N}(r):= \underset{(k, \ell) \in I^+ \times I^-}{\bigcup} \mathcal{N}_k^+(r) \cup \mathcal{N}_\ell^-(r)$ is  the set of \text{non-crossing} points. 
\end{enumerate}
 \end{theo}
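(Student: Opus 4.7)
The approach is to translate each crossing condition into an elementary covering statement on $S_\mu$ or $S_{\mu'}$. Unfolding the definition of $\mathcal{I}_{\bf C}^+(r)$, a state $y \in \mathcal{I}^+(r)$ lies in $\mathcal{I}_{\bf C}^+(r)$ iff some $a_k \in S_\mu$ satisfies $y - a_k \in \mathcal{I}^-(r)$; since $\delta \mid a_k$, the congruence $y - a_k \equiv r \pmod{\delta}$ is automatic, and the condition reduces to the existence of $a_k$ in the discrete interval $(y,\, y - D']$ (or simply $(y, +\infty)$ when $D' = -\infty$). A mirror-image reduction for $\mathcal{I}_{\bf C}^-(r)$ asks for some $b_\ell \in [-y,\, D - 1 - y]$ (or $[-y, +\infty)$ when $D = +\infty$).

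Once this reduction is in hand, cases (i) and (ii) are immediate: when $D = +\infty$, taking $b_j = -D'$ (the maximum when $D'$ is finite, or any sufficiently large $b_j$ when $D' = -\infty$) gives $y + b_j \in \mathcal{I}^+(r)$ for every $y \in \mathcal{I}^-(r)$, and case (ii) is dual. For (iii)--(v), the key tool is the ``predecessor'' $a_j := \max\{a \in S_\mu \cup \{0\}: a \leq y\}$ and its successor $a_{j+1}$, which exists since $y \leq D - 1$ when $D < +\infty$ and $D = +\infty$ otherwise. Sufficiency of the gap condition then follows from $a_{j+1} \leq a_j - D' \leq y - D'$ as soon as every gap is $\leq -D'$. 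For necessity, when a gap $a_{k_0} - a_{k_0-1} > -D'$ occurs, divisibility ($\delta \mid D'$ and $\delta \mid (a_k - a_{k-1})$, both inherited from $\delta = d \wedge d'$) upgrades the strict inequality to $a_{k_0} - a_{k_0-1} \geq -D' + \delta$; this lets me exhibit $y := a_{k_0-1} + r$ as a bad point in $\mathcal{I}^+(r)$, since the next element $a_{k_0}$ overshoots $y - D'$ by at least $\delta - r > 0$.

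For part (vi), I parametrize every element of $\mathcal{I}^+(r) \cap [a_{k-1},\, a_k - 1]$ uniquely as $y = a_{k-1} + r + \delta s$ with $s \geq 0$ (using $\delta \mid a_{k-1}$), and translate the failure condition $a_k > y - D'$ into $\delta s + r < a_k - a_{k-1} + D'$, which simplifies to $s \leq (a_k - a_{k-1} + D')/\delta - 1$ once the fractional offset $r \in [0, \delta)$ is absorbed via divisibility. This produces the stated form of $\mathcal{N}_k^+(r)$, while the dual parametrization $y = -b_{\ell-1} + r + \delta s$ with $s \leq -1$ yields $\mathcal{N}_\ell^-(r)$. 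The main obstacle is the modular bookkeeping: verifying that the bounds on $s$ simultaneously encode the membership in $\mathcal{I}^{\pm}(r)$ and the non-crossing defect, and checking this uniformly as $D$ and $-D'$ range over finite values or $\pm \infty$. The rest of the proof is routine once the predecessor/successor picture is set up.
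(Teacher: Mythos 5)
Your proposal is correct and follows essentially the same route as the paper: both reduce membership in $\mathcal{I}_{\bf C}^{\pm}(r)$ to whether an element of $S_\mu$ (resp. $S_{\mu'}$) falls in a window of length $-D'$ (resp. $D$) attached to $y$, and derive the gap conditions from the resulting covering requirement, with the divisibility of $D$, $D'$ and the increments by $\delta$ doing the modular bookkeeping in parts (v)--(vi). The paper phrases this as covering $\mathcal{I}^{\pm}(r)$ by the intervals $A_k=\{a_k+D'+r,\ldots,a_k+r-\delta\}$ and $B_\ell$, whereas you argue pointwise via the predecessor/successor in $S_\mu\cup\{0\}$; these are the same argument in different clothing, and your treatment of the necessity direction and of (vi) is in fact more explicit than the paper's.
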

 
\begin{proof} (i)-(ii). By definition.\\
(iii)-(iv). Note that \eqref{plusorbit}  and \eqref{negativeorbit} can be rewritten as
\[
\mathcal{I}_{\bf C}^+(r)= \mathcal{I}^+(r) \cap \bigcup_{k\geq 1} A_k \text{ \rm with } A_k:=\{a_k+D'+r,\ldots,a_k+r-\delta\}
\]
and
\[
\mathcal{I}_{\bf C}^-(r)= \mathcal{I}^-(r) \cap \bigcup_{\ell\geq 1}B_{\ell} \text{ \rm with } B_{\ell}:=\{-b_{\ell}+r,\ldots,-b_{\ell}+ D+r-\delta\}.
\]
 
To cover $\mathcal{I}^+(r)$ by countably many same length sub-intervals, it requires $r \in A_1$ and further, no point in the form $r+\delta \mathbb Z$ stays inside the gap between $A_k$ and $A_{k+1}$ since $\{a_k+D'+r\}_{k\ge 1}$ is a strictly increasing sequence. More precisely, 
\begin{align*} 
\mathcal{I}_{\bf C}^+(r)=\mathcal{I}^+(r) &\Longleftrightarrow \left\{
\begin{array}{lll}
a_1+D'+r \leq r    \\
a_{k+1}+D'+r\leq a_k+r, \forall k\geq 1 \\ 
\end{array}
\right.    
&\Longleftrightarrow \left\{
\begin{array}{lll}
a_1\leq -D'  \\
a_{k+1}-a_k\leq -D',  \forall k\geq 1 \\ 
\end{array}
\right.    
 \end{align*}  
Identically, we also infer
  \begin{align*} 
\mathcal{I}_{\bf C}^-(r)=\mathcal{I}^-(r) &\Longleftrightarrow \left\{
\begin{array}{lll}
-b_1+D+r-\delta \geq r-\delta    \\
-b_\ell+r \leq -b_{\ell+1}+D+r, \forall \ell \geq 1 \\ 
\end{array}
\right.    
&\Longleftrightarrow \left\{
\begin{array}{lll}
b_1\leq D  \\
b_{\ell+1}-b_\ell \leq D,  \forall \ell\geq 1. \\ 
\end{array}
\right.    
 \end{align*}  
(v). This is a direct consequence of (iii) and (iv).\\
(vi). A straightforward argument yields
$$(\mathcal{N}_k^+(r)-S_\mu) \cap \mathcal{I}^-(r)= (\mathcal{N}_\ell^-(r)-S_{\mu'}) \cap \mathcal{I}^+(r)=\emptyset$$
 for every pair $(k,\ell) \in I^+\times I^-$ and it is enough to end the proof. 
\end{proof}

 \begin{exam} We simply deal with the case when $D, -D'<+\infty$ by, for instance, taking $S_\mu:=\{2, 4, 10\}$ and $S_\mu':=\{-4, -1\}$. Obviously, $D=10, D'=-4, d=2, d'=1$ and the chain $\mathcal{X}^{(0)}$ has the unique essential class $\mathcal{I}(0):=\{-4,\ldots, 9\}$. An easy verification on (v) would lead to $\mathcal{I}_{\bf C}^-(0)=\mathcal{I}^-(0)$ while $\mathcal{I}_{\bf C}^+(0) \varsubsetneq \mathcal{I}^+(0)$ and its complement $\mathcal{N}_3^{+}=\{4, 5\}$ according to (vi). Hence, equality is achieved only by replacing $S_\mu$ by $\{2, 6, 10\}$. 
 \end{exam}
 
\begin{remark} We do emphasize that it fails to let $\mathcal{I}_{\bf C}^+(r) \varsubsetneq \mathcal{I}^+(r)$ and $\mathcal{I}_{\bf C}^-(r) \varsubsetneq \mathcal{I}^-(r)$ simultaneously occur due to (v). In other words, $\mathcal{I}_{\bf C}(r)$ always has at least one side which coincides with $\mathcal{I}(r)$. The case when $S_{\mu} \subset \mathbb Z$ and $S_{\mu'} \subset \mathbb Z$ is much complicated since the behaviour of the chain now is  significantly affected by many factors. However, in connection with Theorem \ref{generalcase} and the above theorem, one may derive some properties of $\mathcal{I}_{\bf C}$, for instance, $\mathcal{I}_{\bf C}(r)=\mathcal{I}(r)$ if and only if $D=-D'=+\infty$.   
 \end{remark}
 
 \subsection{Invariant measure for $\mathcal X_{\bf C}^{(0)}$}  
 
Use of the explicit formula \eqref{discrete} of $\nu$ enables us to derive the invariant measure of the  sub-process $\mathcal X_{\bf C}^{(0)}$. In particular 
\begin{theo} 
Assume $S_\mu \subset \mathbb Z^+$ and $S_{\mu'} \subset \mathbb Z^-$. Let $\rho$ be the measure on $\mathbb Z$ defined by 
\begin{align}\label{formula_rho}
\rho(n):=\left\{
\begin{array}{lll}
\displaystyle \sum_{k=1}^{+\infty} \mu(k)\,\mu'[n-k+1,n] &{\rm if} &n \leq -1, \\
\displaystyle \sum_{k=1}^{+\infty} \mu'(-k)\,\mu[n+1,n+k] &{\rm if} &n \geq 0.
\end{array} 
\right. 
\end{align}

Then,  for any $x_0\in \mathbb Z$, the restriction $\rho_{x_0}$ of $\rho$ to $\mathcal{I}_{\bf C}(r_{x_0})$ 
is an invariant measure for $\mathcal X_{\bf C}^{(0)}$.
 \end{theo}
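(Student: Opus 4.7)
The plan is to reinterpret $\rho$ as the one-step cross-sign flow induced by the invariant measure $\nu$ of the preceding lemma, and then combine this reformulation with a Neumann-series factorisation of the crossing kernel $C$ to verify $\rho C=\rho$. The restriction statement will follow because $\rho$ vanishes outside $\mathcal I_{\bf C}(r_{x_0})$.

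First, starting from \eqref{formula_rho} and swapping the order of summation (permitted since all terms are non-negative), I would rewrite
\[
\rho(y)=\sum_{x<0}\nu(x)\,\mu(y-x)\quad(y\geq 0),\qquad \rho(n)=\sum_{x\geq 0}\nu(x)\,\mu'(n-x)\quad(n\leq -1).
\]
Writing $P$ for the one-step kernel of $\mathcal X^{(0)}$, this identifies $\rho(y)$ with the $\nu$-flow of $P$ across the level $0$ at $y$: $\rho(y)=\sum_{\mathrm{sign}(x)\neq\mathrm{sign}(y)}\nu(x)P(x,y)$.

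Next, let $\nu^\pm,\rho^\pm$ denote the restrictions of $\nu,\rho$ to $\mathbb Z_0^+$ and $\mathbb Z^-$, and $P_{\sigma\sigma'}$ the block of $P$ sending sign $\sigma$ to sign $\sigma'$. The identification above reads $\rho^+=\nu^-P_{-+}$ and $\rho^-=\nu^+P_{+-}$, while the invariance $\nu=\nu P$, split according to the sign of the target, gives
\[
\rho^+=\nu^+(I-P_{++}),\qquad \rho^-=\nu^-(I-P_{--}).
\]
On the other hand, because $S_\mu\subset\mathbb Z^+$, a chain starting at $x<0$ remains in $\mathbb Z^-$ precisely while $S_n<-x$, so $(P_{--})^n(x,x')=\mu^{*n}(x'-x)$ for $x,x'<0$; thus $\mathcal U$ is the Neumann series of $P_{--}$, and \eqref{matrixC} becomes
\[
C(x,y)=\bigl[(I-P_{--})^{-1}P_{-+}\bigr](x,y)\quad(x<0,\,y\geq 0),
\]
with the symmetric expression on the other block. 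Convergence is automatic as $\mu^{*n}(k)=0$ for $n>k$.

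Combining these ingredients, for $y\geq 0$,
\[
\rho C(y)=\rho^-(I-P_{--})^{-1}P_{-+}(y)=\nu^-(I-P_{--})(I-P_{--})^{-1}P_{-+}(y)=\nu^-P_{-+}(y)=\rho^+(y)=\rho(y),
\]
and the dual calculation handles $y<0$. Finally, the formula $\rho^+=\nu^-P_{-+}$ shows that for $y\in\mathcal I^+(r)$, $\rho(y)>0$ if and only if $(y-S_\mu)\cap\mathcal I^-(r)\neq\emptyset$, i.e.\ $y\in\mathcal I_{\bf C}^+(r)$ by \eqref{plusorbit}; the symmetric statement holds on the negative side. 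Hence $\rho$ vanishes outside $\mathcal I_{\bf C}(r_{x_0})$, so its restriction $\rho_{x_0}$ to that class is automatically $C$-invariant. The main obstacle I foresee is keeping the index swaps in the Fubini step clean and carefully justifying the block factorisation $C=(I-P_{--})^{-1}P_{-+}$ from \eqref{matrixC}; once these are settled, the invariance reduces to a one-line algebraic balance with no delicate analytic estimate.
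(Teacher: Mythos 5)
Your proposal is correct and is essentially the paper's own argument rewritten in block-operator form: your identities $\rho^{\pm}=\nu^{\mp}P_{\mp\pm}=\nu^{\pm}(I-P_{\pm\pm})$ and $C_{-+}=(I-P_{--})^{-1}P_{-+}$ are precisely the paper's relations \eqref{rho-nu}, \eqref{nu-rho} and \eqref{mainformula}, with the potential kernel $\mathcal U$ playing the role of the Neumann series of $P_{--}$ and the signed measure $\mathcal A=\delta_0-\mu$ that of $I-P_{--}$. The one step that needs the same care in both versions is the associativity $\nu^{-}(I-P_{--})\,\mathcal U_{--}=\nu^{-}$ (the paper's interchange of summation in proving \eqref{nu-rho}), since the kernel $I-P_{--}$ is signed and absolute convergence of the double sum is not automatic when $\mu'$ has heavy tails; apart from flagging that, your argument and the paper's coincide.
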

\begin{proof}  

Consider the signed measures $\mathcal{A}$ and $\mathcal{A}'$ defined by $\mathcal{A}(m) = \delta_{0}(m) - \mu(m)$ if $m \geq 0$ and $\mathcal{A}'(m) = \delta_{0}(m) - \mu'(m)$ if $m \leq 0$. It is easily seen that
\begin{align} \label{convolution}
    \mathcal{A}* \mathcal{U} = \mathcal{U}*\mathcal{A} = \delta_{0} \text{ and } \mathcal{A}'*\mathcal{U}' = \mathcal{U}'* \mathcal{A}' = \delta_{0}. 
\end{align} 
In addition, we also have
\begin{align} \label{rho-nu}
   \rho(n) := \left\{
   \begin{array}{lll}
   \displaystyle \sum_{k = 0}^{+\infty}\mathcal{A}(k)\,\nu(n-k) &{\rm if} &n \leq -1,\\
 \displaystyle \sum_{k = 0}^{+\infty}\mathcal{A}'(-k)\,\nu(n+k) &{\rm if} &n \geq 0.
   \end{array}
    \right.
\end{align}  
Indeed, if $n \geq 0$ then we get 
\begin{align*}
    \displaystyle \sum_{k = 0}^{+\infty}\mathcal{A}'(-k)\,\nu(n+k) &= \displaystyle \sum_{k=0}^{+\infty} \left(\delta_{0}(-k) - \mu'(-k)\right)\, \nu(n + k)\\
    &= (1 - \mu'(0))\,\nu(n) - \displaystyle \sum_{k=1}^{+\infty} \mu'(-k)\nu(n + k)\\ 
    &= \displaystyle \sum_{k=1}^{+\infty} \mu'(-k)\,\left(\mu[n+1, +\infty[ - \mu[n+k+1, +\infty[ \right)\\
    &=\displaystyle \sum_{k=1}^{+\infty} \mu'(-k)\,\mu[n+1,n+k].
\end{align*} 

Conversely, $\nu$ can be represented in terms of $\rho$ and $\mathcal{U}'$ directly from \eqref{convolution} and \eqref{rho-nu}, that is, for $n \geq 0$ 
\begin{align*}
    \displaystyle \sum_{k = 0}^{+\infty}\mathcal{U}'(-k)\,\rho(n + k) &=  \displaystyle \sum_{k = 0}^{+\infty}\mathcal{U}'(-k)\,\displaystyle \sum_{\ell = 0}^{+\infty}\mathcal{A}'(-\ell)\,\nu(n + k + \ell) \\
    &=\displaystyle \sum_{k=0}^{+\infty}\mathcal{U}'(-k)\,\displaystyle \sum_{s=k}^{+\infty}\mathcal{A}'(k-s)\nu(n+s)\\
    &=  \displaystyle \sum_{s = 0}^{+\infty} \nu(n+s)\, \underbrace{\displaystyle \sum_{k=0}^{s}\mathcal{U}'(-k)\,\mathcal{A}'(-s+k)}_{\delta_0(-s)} \\
    &= \nu(n)
\end{align*} 
and the same property holds for $n \leq -1$. Briefly,  one may write 
\begin{align} \label{nu-rho}
   \nu(n) :=  \mathbb{1}_{]-\infty, 0[}(n)\left(\displaystyle \sum_{k = 0}^{+\infty}\mathcal{U}(k)\,\rho(n - k)\right)+\mathbb{1}_{[0, +\infty[}(n)\left(\displaystyle \sum_{k = 0}^{+\infty}\mathcal{U}'(-k)\,\rho(n + k)\right).
\end{align}   
We claim that
\begin{align} \label{mainformula}
    \nu_{x_0}(n) := \mathbb{E}_{\rho_{x_{0}}}\left(\displaystyle \sum_{j = 0}^{C_1 - 1} \mathbb{1}_{n}(X_j^{(0)}) \right),\,\,\,\,\,\,\,\,\, \text{if } n \in \mathcal{I}(r_{x_0})   
\end{align} 
where $\mathbb{E}_{\rho_{x_0}}(.) = \displaystyle \sum_{w \in \mathcal{I}_{\bf C}(r_0)} \rho_{x_0}(w)\,\mathbb{E}_{w}(.)$ indicates the expectation governed by $\rho_{x_0}$.\\

If $n \in \mathcal{I}^-(r_{x_0})$ then the crossing process can reach $n$ before the first crossing time if and only if $X_0^{(0)}=\omega \in \mathcal{I}_{\bf C}^-(r_{x_0})$ and $\omega \leq n$. In other words, there is $k \in \mathbb{Z}_0^+$ and $i \geq 1$ s.t. $S_i=k$ and $\omega=n-k$. Since $\displaystyle \sum_{j = 0}^{C_1 - 1} \mathbb{1}_{n}(X_j^{(0)}) = \displaystyle \sum_{j=0}^{+\infty} \mathbb{1}_{n}(X_j^{(0)})\,\mathbb{1}_{\{C_{1}>j\}}$,  
\begin{align*}
    \mathbb{E}_{\rho_{x_0}} \left(\displaystyle \sum_{j = 0}^{C_1 - 1} \mathbb{1}_{n}(X_j^{(0)}) \right) &= \displaystyle \sum_{k=0}^{+\infty} \rho_{x_0}(n - k)\, \mathbb{E}_{n - k} \left(\displaystyle \sum_{j=0}^{+\infty} \mathbb{1}_{n}(X_j^{(0)})\,\mathbb{1}_{\{C_{1}>j\}}\right) \\
    &= \displaystyle \sum_{k=0}^{+\infty}\rho_{x_0}(n-k) \displaystyle \sum_{j=0}^{+\infty} \mathbb{P}_{n-k}[X_j^{(0)}=n, C_1 > j]\\
    &=\displaystyle \sum_{k=0}^{+\infty}\rho_{x_0}(n-k) \displaystyle \sum_{j=0}^{+\infty} \mathbb{P}[S_j=k]\\ 
    &= \displaystyle \sum_{k=0}^{+\infty}\rho_{x_0}(n - k)\, \mathcal{U}(k) \\
   &= \nu_{x_0}(n).
\end{align*}
Hence, \eqref{mainformula} is true for every $n \in \mathcal{I}(r_{x_0})$ and yields that
$$\displaystyle \sum_{m \in \mathcal{I}(r_{x_0})} \nu_{x_0}(m)\,p(m, n) = \mathbb{E}_{\rho_{x_0}}\left(\displaystyle \sum_{j = 1}^{C_1} \mathbb{1}_{n}(X_j^{(0)}) \right).$$
Since $\nu_{x_0}$ is invariant on $\mathcal{I}(r_{x_0})$, i.e. $\nu_{x_0} = \nu_{x_0} \, P$, we again apply  \eqref{mainformula} and simplify as 
$$\mathbb{E}_{\rho_{x_0}} \left(\mathbb{1}_{n}(X_0^{(0)})\right) = \mathbb{E}_{\rho_{x_0}}\left(\mathbb{1}_{n}(X_{C_1}^{(0)})\right).$$
The left hand side is $\rho_{x_0}(n)$ and the right hand side is the sum $\displaystyle \sum_{m \in \mathcal{I}_{\bf C}(r_{x_0})} \rho_{x_0}(m)\, C(m, n)$, which prove that $\rho_{x_0}$ is an invariant measure for $  \mathcal X_{\bf C}^{(0)}$ on $\mathcal{I}_{\bf C}(r_{x_0})$. 
\end{proof}

\section{Criteria for the recurrence of $ \mathcal{X}^{(\alpha)}$,  $  0\leq \alpha \leq 1$}  
{\bf From now on, we assume $d=d'=1$} and keep in mind that the recurrence of $\mathcal X^{(\alpha)} $ always means the recurrence of the state $0$ since  {\bf $\mathcal X^{(\alpha)} $  has an unique  irreducible class in this case}.   

\subsection{Classical approach}
In this subsection, we consider the first passage of $(S_n)_{n\geq 0}$ to the subset $]0, +\infty[$ and of $(S_n')_{n\geq 0}$ to the subset $]-\infty, 0[$, namely
\begin{align*}
     \ell_+:=\inf\{k>0: S_k>0\} \text{ and } \ell_-':=\inf\{k>0: S_k'<0\} 
\end{align*}
(with the convention $\inf \emptyset=+\infty$). The random variables $\ell^+$ and $\ell_-'$ are stopping times with respect to the canonical filtration $(\sigma(\xi_k, \xi_k', 1\leq k \leq n))_{n\geq 1}$ In the sequel, we
only consider cases when these random variables are $P$-as finite i.e. equivalently when
$\mathbb{P}[\limsup S_n=+\infty]= \mathbb{P}[\liminf S_n'=-\infty] =1$; hence, the random variables $S_{\ell_+}$ and $S_{\ell_-'}'$
are well defined in these cases and \textbf{we denote by $ \mu_+$ and $ \mu'_-$ their respective distributions}

Let us define also, for $h\geq 1$, the renewal functions associated with  the ladder heights $S_{\ell^+}$ and $S_{\ell^-}$, respectively by
\begin{align*}
   & C(h):= \displaystyle \sum_{n=1}^{+\infty} \mathbb P[S_n=h, \min_{1\leq i\leq n}S_i >0],\\
   & C'(-h):= \displaystyle \sum_{n=1}^{+\infty} \mathbb P[S_n'=-h, \min_{1\leq j\leq n}S_j' >0]. 
\end{align*} 
We now get a glimpse of the following well-known criterion of recurrence of $ \mathcal{X}^{(\alpha)}$ 
\begin{theo} \text{(Kemperman)} The general oscillating random walk $\mathcal{X}^{(\alpha)}$ is recurrent if and only if
 \begin{align}\label{Kemperman_cond}
     \displaystyle \sum_{h=1}^{+\infty} C(h)\,C'(-h)=+\infty.
 \end{align}
 \end{theo}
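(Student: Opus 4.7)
The plan is to reduce the recurrence of $\mathcal X^{(\alpha)}$ at $0$ to the divergence of $\sum_h C(h)C'(-h)$ via a ladder-height decomposition. Under the standing assumption $d=d'=1$, the chain has a unique essential class, so by the standard Markov chain criterion recurrence is equivalent to $G(0,0):=\sum_{n\ge 0}\mathbb P_0[X_n^{(\alpha)}=0]=+\infty$. The goal is therefore to recognise $G(0,0)$, up to multiplicative constants that only depend on the behavior at the single state $0$ (and hence on $\alpha$), as the series appearing in Kemperman's criterion.

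First I would reinterpret $C(h)$ as a strict ascending renewal measure of $(S_n)$. Applying the time reversal $\widetilde S_k:=S_n-S_{n-k}$ to the event $\{S_n=h,\ \min_{1\le i\le n}S_i>0\}$ shows that $\widetilde S_n=h$ is a strict running maximum, so
\[
C(h)=\sum_{k\ge 1}\mathbb P[H_k^+=h]=U_+(h),
\]
where $H_k^+$ denotes the $k$-th strict ascending ladder height of $S$; an identical duality gives $C'(-h)=U_-'(-h)$, the analogous strict descending renewal measure of $S'$.

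Next I would decompose any trajectory from $0$ to $0$ into positive/negative excursions separated by the crossing times $C_k$ of Section~$3$. By the strong Markov property and the independence of the $\xi$- and $\xi'$-sequences, the expected number of returns to $0$ factors into a geometric series in the number of such excursions; each single positive-then-negative excursion that ``lands'' back at $0$ contributes, via Wiener-Hopf, the quantity $\sum_{h\ge 1}U_+(h)U_-'(-h)$ (up to a universal constant reflecting the first jump from $0$). Summing the geometric series then yields that $G(0,0)$ and $\sum_h C(h)C'(-h)$ are simultaneously finite or infinite, which is the claimed equivalence.

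The main obstacle will be the Wiener-Hopf bookkeeping in the excursion identity: one must show that an excursion that leaves $0$, reaches a strict ascending ladder level $h$ of the $\xi$-walk and then descends back to $0$ exactly via the $\xi'$-walk contributes weight proportional to $U_+(h)U_-'(-h)$. This requires simultaneously tracking the joint law of the first-passage time and overshoot for both $S$ and $S'$, and correctly pairing strict versus weak ladder heights so that exact hitting of $0$ (as opposed to an overshoot) is counted. Once this identity is in place, the dependence on $\alpha$ drops out since modifying only the law at the single state $0$ alters $G(0,0)$ by a bounded factor, yielding the stated criterion uniformly for $0\le\alpha\le 1$.
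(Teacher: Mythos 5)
First, a point of reference: the paper does not prove this theorem. It is quoted as Kemperman's classical criterion, with the proof deferred to \cite{Kemperman} (and to \cite{Bremont} for the companion Rogozin--Foss integral form), so there is no in-paper argument to measure yours against; what follows judges your sketch on its own terms.

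Your opening steps are sound: under $d=d'=1$ recurrence of the state $0$ is the right target, and the time-reversal identification $C(h)=\mathcal U_+(h)$ (the strict ascending renewal mass of $S$ at $h$; equivalently, the probability that the chain started at $-h$ first enters $\mathbb Z_0^+$ by landing exactly on $0$) is correct, as is the dual statement for $C'$ once the obvious typo in its definition is repaired. The genuine gap is the central identity, which you explicitly name as ``the main obstacle'' but do not close --- and the form in which you state it cannot be right. You claim that each positive-then-negative excursion returning to $0$ ``contributes the quantity $\sum_{h\ge 1}\mathcal U_+(h)\mathcal U'_-(-h)$ up to a universal constant'' and that a geometric series then finishes the proof. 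But the contribution of a single excursion to $G(0,0)$ is a return probability, hence at most $1$, whereas $\sum_h C(h)C'(-h)$ is infinite precisely in the recurrent case; no per-excursion weight can equal that series. The correct structure is either to show $\mathbb P_0[\tau^{(\alpha)}<+\infty]=1$ iff the series diverges, or to identify $G(0,0)$ (equivalently the Green function at $0$ of the crossing sub-process) directly with $\sum_h C(h)C'(-h)$ up to constants; the latter is exactly the Wiener--Hopf/Parseval computation of Kemperman and Br\'emont and is the entire content of the theorem. Two further omissions: for general $\mu,\mu'$ charging both signs, a positive excursion is driven by $\xi'$, not by the ladder structure of $\xi$, so one must first reduce to the ladder-height walk $\mathcal X^{(0)}(\mu_+,\mu'_-)$ (as in Lemma \ref{lem}) and check that the renewal functions of $\mu_+,\mu'_-$ are precisely $C,C'$; and no invariant-measure argument in the style of Proposition \ref{tail} can substitute here, since that route only yields a sufficient condition in terms of tails, not the stated equivalence in terms of renewal functions.
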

However, it is quite theoretical and difficult to check in several cases. Next, we will take into consideration an equivalent condition to \eqref{Kemperman_cond}, which has the additional advantage of being easily computable.

\begin{theo} (\text{Rogozin-Foss}) If for some $\epsilon>0$
\begin{align}
    \displaystyle \int_{-\epsilon}^{\epsilon} \dfrac{1}{\vert 1-\mathbb E[e^{itS_{\ell_+}}]\vert \, \vert 1-\mathbb E[e^{itS_{\ell_-'}'}]\vert }\, dt <+\infty,
\end{align}
then the original crossing process is transient.\\
If, in addition,  $\text{Re }((1-\mathbb E[e^{itS_{\ell_+}}])(1-\mathbb E[e^{itS_{\ell_-'}'}])) \geq 0$ for $\vert t \vert <\epsilon$ for some $\epsilon>0$  and the below condition holds 
\begin{align}
   \displaystyle \int_{-\epsilon}^{\epsilon} \text{Re }\left(\dfrac{1}{(1-\mathbb E[e^{itS_{\ell_+}}]) \, (1-\mathbb E[e^{itS_{\ell_-'}'}])} \right) \,dt =+\infty,
\end{align}
then $\mathcal{X}^{(\alpha)}$ is recurrent.
 \end{theo}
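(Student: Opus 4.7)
The plan is to derive the Rogozin--Foss integral test from Kemperman's series criterion \eqref{Kemperman_cond} via a Fourier-analytic argument on the torus. First I would recognize that $C(\cdot)$ coincides, for $h\geq 1$, with the renewal measure of the strict ascending ladder height $S_{\ell_+}$, yielding the identity $1+\sum_{h\geq 1}C(h)e^{ith}=1/(1-\mathbb E[e^{itS_{\ell_+}}])$, and symmetrically for $C'$ supported on $\mathbb Z^-$. A Parseval-type computation applied to these two half-line measures would then furnish
$$\sum_{h\geq 1}C(h)\,C'(-h)=\frac{1}{2\pi}\int_{-\pi}^{\pi}\frac{dt}{\bigl(1-\mathbb E[e^{itS_{\ell_+}}]\bigr)\bigl(1-\mathbb E[e^{itS_{\ell_-'}'}]\bigr)}-1,$$
which is the bridge between the series criterion and the integral criterion.

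Next, since we are working under the standing assumption $d=d'=1$, aperiodicity ensures that $t\mapsto 1-\mathbb E[e^{itS_{\ell_+}}]$ and $t\mapsto 1-\mathbb E[e^{itS_{\ell_-'}'}]$ vanish on $[-\pi,\pi]$ only at $t=0$; hence the integrand on the right-hand side is bounded on $\{\epsilon\leq|t|\leq \pi\}$ for every $\epsilon>0$. Consequently the nature of the integral (finite or infinite) is governed entirely by its behaviour on a small interval $]-\epsilon,\epsilon[$, and everything reduces to a local analysis at the origin, exactly where the two hypotheses of the theorem are formulated.

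From there the two halves of the statement follow in parallel. For the transience part, I would bound the integrand in modulus; the first hypothesis $\int_{-\epsilon}^{\epsilon}dt/(|1-\mathbb E[e^{itS_{\ell_+}}]|\,|1-\mathbb E[e^{itS_{\ell_-'}'}]|)<+\infty$ then forces $\sum_{h}C(h)C'(-h)<+\infty$, and Kemperman's criterion yields transience of $\mathcal X^{(\alpha)}$. For the recurrence part, the series is real and non-negative, so only the real part of the complex integrand contributes; under the sign condition $\mathrm{Re}\bigl((1-\mathbb E[e^{itS_{\ell_+}}])(1-\mathbb E[e^{itS_{\ell_-'}'}])\bigr)\geq 0$ near $0$ one has
$$\mathrm{Re}\!\left(\frac{1}{(1-\mathbb E[e^{itS_{\ell_+}}])(1-\mathbb E[e^{itS_{\ell_-'}'}])}\right)=\frac{\mathrm{Re}\bigl((1-\mathbb E[e^{itS_{\ell_+}}])(1-\mathbb E[e^{itS_{\ell_-'}'}])\bigr)}{\bigl|(1-\mathbb E[e^{itS_{\ell_+}}])(1-\mathbb E[e^{itS_{\ell_-'}'}])\bigr|^{2}}\geq 0,$$
so the divergence of the second integral transfers to divergence of $\sum_{h}C(h)C'(-h)$, and Kemperman gives recurrence.

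The main obstacle is to make the Parseval step rigorous: the renewal measures $C$ and $C'$ are typically infinite, so the classical $L^{2}$ identity does not apply directly. I expect to regularize by introducing a subcritical factor $r^{h}$ with $r<1$, applying Parseval to the summable sequence $(r^{h}C(h),r^{h}C'(-h))$, and then letting $r\uparrow 1$ through monotone convergence on the left and an Abel-type dominated convergence argument on the right. Controlling the oscillations of the complex integrand while passing to the limit is the delicate point, and the Wiener--Hopf-type positivity $\mathrm{Re}(\cdot)\geq 0$ is precisely what is needed to rule out destructive cancellations in the recurrent case.
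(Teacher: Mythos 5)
The paper offers no proof of this statement: it is quoted from Rogozin--Foss, with a pointer to Proposition $4.4$ of Br\'emont's paper for \say{a luminous proof}, so there is no internal argument to compare yours against. Your route --- reading $C$ and $C'$ as the renewal measures of the ladder heights $S_{\ell_+}$ and $S_{\ell_-'}'$, converting Kemperman's series $\sum_{h}C(h)\,C'(-h)$ into the integral of $\bigl((1-\mathbb E[e^{itS_{\ell_+}}])(1-\mathbb E[e^{itS_{\ell_-'}'}])\bigr)^{-1}$ by a Parseval computation, and localizing at $t=0$ --- is the standard derivation (essentially the one in Br\'emont), and your Parseval identity is set up correctly, including the conjugation needed to pair $C(h)$ with $C'(-h)$ rather than $C'(h)$.

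As a proof, however, the proposal has a genuine gap, and it is exactly the one you flag at the end without resolving. Because $\mathcal U_+=\sum_k \mu_+^{*k}$ and $\mathcal U'_-$ have infinite total mass, the Parseval identity is only formal; after inserting the factor $r^h$ the regularized transforms are $1/(1-\mathbb E[(re^{it})^{S_{\ell_+}}])$, not $1/(1-\mathbb E[e^{itS_{\ell_+}}])$. Both implications then hinge on uniform-in-$r$ comparisons between these two quantities: for the transience half you must dominate the regularized integrals by the assumed finite modulus integral uniformly as $r\uparrow 1$, and for the recurrence half you need a Fatou-type lower bound for $\int \mathrm{Re}(\cdot)$ even though the sign hypothesis is imposed only on the limiting integrand, not on the regularized ones. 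These estimates are the actual content of the theorem and occupy the bulk of the Rogozin--Foss and Br\'emont arguments; naming them \say{the delicate point} leaves the proof a plan rather than a proof. Two smaller items to make explicit: (i) $d=d'=1$ implies the ladder-height laws $ \mu_+$ and $ \mu'_-$ are themselves aperiodic, so that $1-\mathbb E[e^{itS_{\ell_+}}]$ vanishes on $[-\pi,\pi]$ only at $t=0$ and the localization to $]-\epsilon,\epsilon[$ is legitimate --- this is a true but nontrivial lemma; (ii) you invoke Kemperman's criterion for $\mathcal X^{(\alpha)}$ with general $\alpha$, which is how the paper states it, so that bridge is fine.
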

 We also refer to the recent paper \cite{Bremont}, Proposition $4.4$ for a luminous proof. 
  
In the simple case when $\mu=\mu$, in other words when $\mathcal{X}^{(\alpha)}$ is an homogeneous classical
random walk on $\mathbb Z$, these conditions turn into the above conditions turn into
\begin{align}
    \displaystyle \int_{\epsilon}^{\epsilon} \vert 1-\hat{\mu}(t) \vert^{-1} dt < +\infty,
\end{align}
and
\begin{align}\label{Kesten}
    \displaystyle \int_{\epsilon}^{\epsilon} Re\left(\dfrac{1}{1-\hat{\mu}(t)}\right)  dt = +\infty,
\end{align}
 In fact, \eqref{Kesten} is a necessary and sufficient
condition for $(S_n)_{n\geq 0}$ be recurrent; see \textsc{Kesten}, \textsc{Spitzer} \cite{KestenSpitzer}, \cite{Revuz} and \cite{Bremont} for proofs and comments. In \cite{Bremont}, Proposition $2.2$, the reader will find an explicit and simple relation between the integral of the function $\text{Re}\left(\dfrac{1}{1-\hat{\mu}(t)}\right)$ and the Green function of the random walk $(S_n)_n$ which enlightens the above statement.
     
 In the next subsection we develop another approach to identify quite general conditions which ensure that   $\mathcal X^{(\alpha)}$ is recurrent. We first consider the case when $S_\mu\subset \mathbb Z^+$ and $S_{\mu'}\subset \mathbb Z^-$ then the general case, replacing the couple $(\mu, \mu')$ by $( \mu_+,  \mu'_-)$.
\subsection{ Tail condition criterion for  the recurrence of $ \mathcal{X}^{(0)}$ when $S_\mu\subset \mathbb Z^+$ and $S_{\mu'}\subset \mathbb Z^-$} 
 An easy observation gives that the crossing sub-process $  \mathcal X_{\bf C}^{(0)}$ is positive recurrent on $\mathcal{I}_{\bf C}(0)$ $(\text{equivalently, }\rho(\mathcal{I}_{\bf C}(0))<+\infty)$  when $D$ and  $D'$  are both finite.  Thus, it is reasonable to study the recurrence of $  \mathcal X_{\bf C}^{(0)}$ in the non-trivial cases.

\begin{prop} \label{tail} Assume that $S_\mu\subset \mathbb Z^+$ and $S_{\mu'}\subset \mathbb Z^-$. Then the total mass of $\rho$ on $\mathcal{I}_{\bf C}(0)$  is finite if and only if
\begin{align}\label{totalmass}
    \displaystyle \sum_{n = 0}^{+\infty} H(n)\,H'(-n) < +\infty,
\end{align}
where $H(n) = \mu]n,+\infty[$ and $H'(-n) = \mu']-\infty, -n[$ respectively stands for the tail distributions of $\mu$ and $\mu'$.

In this case, the Markov chain $\mathcal X_{\bf C}^{(0)}$ is positive recurrent and $\mathcal X^{(0)}(\mu, \mu')$ is recurrent on its essential class.  
\end{prop}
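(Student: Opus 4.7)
The plan is to reduce the claimed equivalence to an explicit computation of the total mass $\rho(\mathcal{I}_{\bf C}(0))$ via formula \eqref{formula_rho}, and then to obtain the recurrence statements from the invariance of $\rho$ together with standard Markov-chain principles. The first observation is that $\rho$ vanishes on the non-crossing set $\mathcal N$ of Theorem \ref{condition}: indeed, if $n\geq 0$ fails to be a crossing point, then every $s\in S_\mu$ with $s>n$ satisfies $s>n-D'$ (because $n-s\notin \mathcal I^-(0)$), and since $-k\in S_{\mu'}\subset [D',-1]$ this forces $\mu[n+1,n+k]=0$ for every admissible $k$; hence $\rho(n)=0$ by \eqref{formula_rho}, the argument for $n\leq -1$ being symmetric. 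Consequently $\rho(\mathcal{I}_{\bf C}(0))$ equals the full sum $\sum_{n\in\mathbb Z}\rho(n)$.

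I would then compute this total sum directly. On the positive half, substituting $\mu[n+1,n+k]=H(n)-H(n+k)$, exchanging summations by Tonelli (all terms being nonnegative), and telescoping the inner sum yields
\[
\sum_{n\geq 0}\rho(n)=\sum_{k\geq 1}\mu'(-k)\sum_{n=0}^{k-1}H(n)=\sum_{n\geq 0}H(n)\sum_{k>n}\mu'(-k)=\sum_{n\geq 0}H(n)\,H'(-n).
\]
A symmetric computation on the negative half, using $\mu'[n-k+1,n]=\mathbb P[\xi_1'<n+1]-\mathbb P[\xi_1'<n-k+1]$ together with the same Tonelli/telescoping scheme, produces the same series, so $\rho(\mathcal{I}_{\bf C}(0))=2\sum_{n\geq 0}H(n)H'(-n)$ and finiteness of the total mass is equivalent to condition \eqref{totalmass}.

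Once \eqref{totalmass} holds, the restriction of $\rho$ to $\mathcal{I}_{\bf C}(0)$ is a finite invariant measure for the irreducible chain $\mathcal X_{\bf C}^{(0)}$, and the standard dichotomy for discrete Markov chains forces positive recurrence on $\mathcal{I}_{\bf C}(0)$. To transfer recurrence to the full process, I would invoke that the crossing times $C_k$ are $\mathbb P$-almost surely finite (as recalled just before \eqref{matrixC}): each visit of $\mathcal X_{\bf C}^{(0)}$ to a point $x\in\mathcal{I}_{\bf C}(0)$ is a genuine visit of $\mathcal X^{(0)}(\mu,\mu')$ to $x$, so the infinitely many recurrent visits of the sub-process yield infinitely many visits of the full process, and $\mathcal X^{(0)}(\mu,\mu')$ is recurrent on its essential class. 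The main technical obstacle I anticipate is the careful bookkeeping in the two Tonelli exchanges and the telescoping, together with the clean verification that $\rho$ vanishes off $\mathcal{I}_{\bf C}(0)$ so that the computation on $\mathbb Z$ really does coincide with the mass on the essential class.
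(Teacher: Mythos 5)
Your proposal is correct and follows essentially the same route as the paper: the identity $\rho(\mathbb Z_0^+)=\rho(\mathbb Z^-)=\sum_{n\geq 0}H(n)H'(-n)$ via Tonelli and telescoping, then positive recurrence from the finite invariant measure and transfer of recurrence to the full chain through the a.s.\ finite crossing times. Your preliminary check that $\rho$ vanishes on the non-crossing set $\mathcal N$ (so that the sum over all of $\mathbb Z$ really equals the mass on $\mathcal I_{\bf C}(0)$) is a small point the paper leaves implicit, and is a welcome addition rather than a departure.
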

\begin{proof}
  We  compute $\rho(\mathbb Z^+)$ by substituting the formula \eqref{formula_rho}
 \begin{align*}
     \displaystyle \sum_{n=0}^{+\infty}\rho(n) &= \displaystyle \sum_{n=0}^{+\infty}\displaystyle \sum_{k=1}^{+\infty}\mu'(-k)\,\mu[n+1,n+k]\\ 
     &=\displaystyle \sum_{k=1}^{+\infty}\mu'(-k)\displaystyle \sum_{n=0}^{+\infty}[H(n)-H(n+k)] \\  
      &= \displaystyle \sum_{k=1}^{+\infty}\mu'(-k) \left[\displaystyle \sum_{n=0}^{k-1}H(n)- \displaystyle \lim_{N \to +\infty}\left(H(N+1)+ \ldots +H(N+k)\right)\right] \\ 
      &= \displaystyle \sum_{n=0}^{+\infty}H(n) \displaystyle \sum_{k=n+1}^{+\infty}\mu'(-k)\\
      &= \displaystyle \sum_{n= 0}^{+\infty}H(n)\,H'(-n).
 \end{align*}

 One also obtains $\displaystyle \sum_{n=-\infty}^{-1}\rho(n)=\displaystyle \sum_{n=0}^{+\infty}H(n)\,H'(-n)$ which immediately implies  \eqref{totalmass}. 
\end{proof}

Apparently, \eqref{totalmass} holds providing that the first moment of either $\xi_n$ or $-\xi_n'$ is finite. This assumption can be sharpen by constraining finite H{\"o}lder moments as  below

\begin{coro} \label{Holder} Assume that $S_\mu\subset \mathbb Z^+, S_{\mu'}\subset \mathbb Z^-$ and $\mathbb{E}[\xi_1^{p}], \mathbb{E}[(-\xi_1')^q] <+\infty$ with $p, q \in ]0,1[$ satisfying $p+ q=1$.  Then \eqref{totalmass} holds and the Markov chain  $\mathcal X^{(0)}$ is recurrent on its unique essential class (and positive recurrent when $\mathbb{E}[\xi_1], \mathbb{E}[-\xi_1'] < +\infty$ by  Corollary \ref{positive-recurrent}).
\end{coro}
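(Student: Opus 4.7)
The plan is to recognize the sum in \eqref{totalmass} as the expectation of a minimum of two independent non-negative integer-valued random variables, and then bound it pointwise via the weighted geometric mean inequality $\min(a,b) \leq a^p b^q$, which trades the H{\"o}lder-type moment condition $p+q=1$ against summability.

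Concretely, I would introduce auxiliary independent random variables $X$ and $Y$, of respective distributions $\mu$ and $-\mu'$ (the pushforward of $\mu'$ under $y \mapsto -y$), on some product probability space. Since $S_\mu \subset \mathbb Z^+$ and $-S_{\mu'} \subset \mathbb Z^+$, both $X$ and $Y$ are non-negative integer-valued. By independence and the identity $\mathbb E[Z] = \sum_{n \geq 0} \mathbb P[Z > n]$ valid for any non-negative integer-valued $Z$, the sum appearing in \eqref{totalmass} can be rewritten as
\[
\sum_{n=0}^{+\infty} H(n)\,H'(-n) = \sum_{n=0}^{+\infty} \mathbb P[X > n]\,\mathbb P[Y > n] = \sum_{n=0}^{+\infty} \mathbb P[\min(X,Y) > n] = \mathbb E[\min(X,Y)].
\]

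Next, I would establish the elementary pointwise inequality $\min(a,b) \leq a^p b^q$ for $a,b \geq 0$ and $p,q \in \, ]0,1[$ with $p+q=1$: assuming $a \leq b$, one has $a^p b^q \geq a^p a^q = a = \min(a,b)$, and the opposite case is symmetric. Applying this with $a = X$, $b = Y$, integrating, and using once more the independence of $X$ and $Y$,
\[
\mathbb E[\min(X,Y)] \leq \mathbb E[X^p Y^q] = \mathbb E[X^p]\,\mathbb E[Y^q] = \mathbb E[\xi_1^p]\,\mathbb E[(-\xi_1')^q] < +\infty,
\]
so condition \eqref{totalmass} is satisfied. Proposition \ref{tail} then yields the positive recurrence of the crossing sub-process $\mathcal X_{\bf C}^{(0)}$ and the recurrence of $\mathcal X^{(0)}$ on its unique essential class, while the extra assertion about positive recurrence when $\mathbb E[\xi_1]$ and $\mathbb E[-\xi_1']$ are finite is just Corollary \ref{positive-recurrent}.

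The argument is almost entirely elementary, and I do not foresee any real obstacle. The only mildly subtle point is identifying the correct pointwise inequality: a direct application of Markov's inequality would only deliver $H(n) = O(n^{-p})$ and $H'(-n) = O(n^{-q})$, hence $H(n)\,H'(-n) = O(1/n)$, which diverges when summed. The weighted geometric mean bound $\min(a,b) \leq a^p b^q$ is precisely the device that captures the exact strength of the H{\"o}lder hypothesis $p+q=1$ and converts it into finite total mass.
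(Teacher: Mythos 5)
Your proof is correct, and it takes a genuinely different route from the paper. The paper works purely with the deterministic sequence $\bigl(H(n)H'(-n)\bigr)_n$: it splits $H(n)H'(-n)=\bigl[H^{p}(n)H'^{q}(-n)\bigr]\cdot\bigl[H^{q}(n)H'^{p}(-n)\bigr]$, controls the first bracket by Markov's inequality (giving a factor $(n+1)^{-p^2-q^2}$), converts the remaining product into a sum via Young's inequality, and finally sums each term using the identity $k\sum_n (n+1)^{k-1}\mathbb P[X\geq n+1]\leq \mathbb E[X^k]$ --- a three-step chain of inequalities in which the hypothesis $p+q=1$ enters through the exponent bookkeeping. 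You instead observe that, for independent $X\sim\mu$ and $Y\sim -\mu'$, the series \eqref{totalmass} \emph{equals} $\mathbb E[\min(X,Y)]$, and then apply the single pointwise inequality $\min(a,b)\leq a^{p}b^{q}$ together with independence. Both arguments are valid and both hinge on $p+q=1$, but yours is shorter, gives the cleaner bound $\mathbb E[\xi_1^p]\,\mathbb E[(-\xi_1')^q]$ (versus the paper's more baroque constant), and as a by-product reformulates the positive-recurrence criterion of Proposition \ref{tail} in the appealing form $\mathbb E[\min(\xi_1,-\xi_1')]<+\infty$ for independent copies --- an identity the paper's computational proof does not surface. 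Your closing remark correctly diagnoses why the naive Markov bound $H(n)H'(-n)=O(1/n)$ is insufficient; that is precisely the obstruction the paper circumvents with Young's inequality and that you circumvent with the weighted geometric mean. The deduction of recurrence from \eqref{totalmass} via Proposition \ref{tail} and of the parenthetical claim via Corollary \ref{positive-recurrent} is handled identically in both proofs.
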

\begin{proof}
The formula $\mathbb{E}[X^k]:=k \displaystyle \int_{0}^{+\infty}t^{k-1} \mathbb{P}[X\geq t]\,dt$ yields 
\begin{align}\label{moment}
     k\displaystyle \sum_{n=0}^{+\infty}(n+1)^{k-1}\mathbb{P}[X\geq n+1] \leq \mathbb{E}[X^k]
 \end{align}
so that, by the \textit{Markov's inequality} for $H^{p}(n)$ and $H'^{q}(n)$, we get  
 \begin{align*}
    \displaystyle \sum_{n=0}^{+\infty} H(n)\, H'(-n)&= \displaystyle \sum_{n=0}^{+\infty}H^{q}(n)H'^{p}(-n)\left[H^{p}(n)H'^{q}(-n)\right] \\
    &\leq \mathbb{E}\left[\xi_1^p\right]^p\,\mathbb{E} [(-{\xi_1'})^q]^q \displaystyle \sum_{n=0}^{+\infty}  \left[(n+1)^{-q^2}H^{q}(n) \, (n+1)^{-p^2}H'^{p}(-n) \right]. 
\end{align*} 
 The product inside the bracket can be transformed into sum by using the \textit{Young's inequality} and then together with \eqref{moment}, it yields
 \begin{align*}
     \displaystyle \sum_{n=0}^{+\infty} H(n)\, H'(-n) &\leq \mathbb{E}\left[\xi_1^p\right]^p\,\mathbb{E} [(-\xi_1')^q]^q \left(q \displaystyle \sum_{n=0}^{+\infty} (n+1)^{-q}H(n)+p \displaystyle \sum_{n=0}^{+\infty} (n+1)^{-p}H'(-n) \right)\\
     &\leq \mathbb{E}\left[\xi_1^p\right]^p\,\mathbb{E} [(-\xi_1')^q]^q \left(\dfrac{q}{p} \mathbb{E}\left[\xi_1^p\right]+\dfrac{p}{q} \mathbb{E} [(-\xi_1')^q] \right)<+\infty.
 \end{align*} 
 \end{proof}
\begin{remark}
 The condition $\mathbb{E}[\xi_1^{p}], \mathbb{E}[(-\xi_1')^q] <+\infty$ with $p+ q=1$ is a sufficient condition for the recurrence of $\mathcal{X}^{(0)}$. Notice that it is not far to be sharp. We refer to Proposition  $5.12$ in \cite{MarcWoess} for an example in the case of the reflected random walk on $\mathbb N$, which corresponds to the antisymmetric case, i.e. $S=-S'$ (with $p=q= 1/2$ there). The reader can find other examples in \cite{RogozinFoss} Theorem $2$ in the case when $S$ and $S'$ are stable random walks on $\mathbb Z$ but $S \neq -S'$.

\end{remark}

\subsection{Recurrence of $ \mathcal{X}^{0)}$ in the general case} 
To treat this model, let us first introduce the basic decomposition of $\xi_n$ and $\xi_n'$, namely
$$\xi_n=\xi_n^+-\xi_n^- \quad \text{and } \quad  \xi_n'=\xi_n'^+-\xi_n'^-,$$
where $\xi_n^{\pm}=\max\{\pm \xi_n,0\}$ and $\xi_n'^{\pm}=\max\{\pm \xi_n',0\}$.

Consider the following assumptions\\
$\bf{(H)} \,\, \left(\mathbb E[\xi_1^-]<\mathbb E[\xi_1^+] \leq +\infty\right)$ or $\left(\mathbb E[\xi_1^-]=\mathbb E[\xi_1^+]<+\infty\right);$\\
$\bf{(H')} \,\, \left(\mathbb E[(\xi_1')^+]<\mathbb E[(\xi_1')^-] \leq +\infty\right)$ or $\left(\mathbb E[(\xi_1')^-]=\mathbb E[(\xi_1')^+]<+\infty\right).$
\begin{lem} \label{abcd} If $\bf H\, (\text{resp. } \bf{H'})$ is satisfied then $\limsup\limits S_n=+\infty\, (\text{resp. } \liminf\limits S_n'=-\infty)$ almost surely. 
\end{lem}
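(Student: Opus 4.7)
The plan is to treat the two disjuncts in $\mathbf{H}$ separately, invoking the strong law of large numbers (in its usual integrable version and in a truncated form) in the first case and the Chung--Fuchs recurrence theorem in the second. The companion statement about $(S'_n)$ under $\mathbf{H'}$ follows by symmetry, applying the argument for $\mathbf{H}$ to the i.i.d.\ sequence $(-\xi'_n)_{n\geq 1}$, so I would focus entirely on the $\mathbf{H}$ half.

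First I would deal with the sub-case $\mathbb{E}[\xi_1^-] < \mathbb{E}[\xi_1^+] \leq +\infty$, in which $\xi_1$ has a well-defined mean in $]0,+\infty]$. If $\mathbb{E}[\xi_1^+] < +\infty$, then $\xi_1$ is integrable with $\mathbb{E}[\xi_1] > 0$, and Kolmogorov's SLLN yields $S_n/n \to \mathbb{E}[\xi_1]$ $\mathbb{P}$-a.s., whence $S_n \to +\infty$ a.s. If instead $\mathbb{E}[\xi_1^+] = +\infty$ (with $\mathbb{E}[\xi_1^-]$ still finite), I would truncate from above: for each $M > 0$, the i.i.d.\ variables $\xi_k^{(M)} := \xi_k \wedge M$ are integrable, so the SLLN provides a $\mathbb{P}$-null set outside which $\frac{1}{n}\sum_{k=1}^n \xi_k^{(M)} \to \mathbb{E}[\xi_1 \wedge M]$. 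Since $\xi_k \geq \xi_k^{(M)}$, we obtain $\liminf_n S_n/n \geq \mathbb{E}[\xi_1 \wedge M]$ a.s.; by monotone convergence $\mathbb{E}[\xi_1 \wedge M] \to \mathbb{E}[\xi_1^+] - \mathbb{E}[\xi_1^-] = +\infty$ as $M \to +\infty$, and intersecting a countable family of null sets yields $S_n/n \to +\infty$ $\mathbb{P}$-a.s. In particular $\limsup_n S_n = +\infty$ a.s.

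For the second disjunct $\mathbb{E}[\xi_1^+] = \mathbb{E}[\xi_1^-] < +\infty$, the step $\xi_1$ is integrable with zero mean. Under the running assumption that $\xi_1$ is not identically zero (otherwise the hypothesis $d = 1$ of the current section already fails), the Chung--Fuchs theorem guarantees that the random walk $(S_n)_{n \geq 0}$ on $\mathbb{Z}$ is recurrent; recurrence forces $\limsup_n S_n = +\infty$ a.s., which is the required conclusion.

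The only step not reducible to quoting a standard theorem is the non-integrable sub-case $\mathbb{E}[\xi_1^+] = +\infty$, which I expect to be the main (very mild) obstacle: the textbook SLLN does not apply directly, but the truncation argument sketched above, combined with monotone convergence, bridges the gap without any further probabilistic input. Everything else is a direct appeal to the SLLN or to Chung--Fuchs.
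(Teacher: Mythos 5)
The paper states this lemma without proof, treating it as a classical fact about one-dimensional random walks, so there is no in-text argument to compare against; your proposal supplies a complete and correct proof. Both halves are sound: in the drifting sub-case the truncation $\xi_k\wedge M$ is integrable precisely because $\mathbb{E}[\xi_1^-]<+\infty$ is forced by $\mathbf{H}$, and monotone convergence gives $\mathbb{E}[\xi_1\wedge M]\uparrow \mathbb{E}[\xi_1^+]-\mathbb{E}[\xi_1^-]$, so $S_n/n\to+\infty$ a.s.; in the centered sub-case, Chung--Fuchs gives recurrence, and since the recurrent set of a recurrent walk is the closed subgroup generated by $S_\mu$ (which is all of $\mathbb{Z}$ under the standing assumption $d=1$ of Section 4), the walk visits arbitrarily large states infinitely often, whence $\limsup_n S_n=+\infty$. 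The only point worth flagging is that you lean on $d=1$ twice (to exclude $\xi_1\equiv 0$ and, implicitly, to identify the recurrent set with $\mathbb{Z}$); you could instead quote the sharper classical statement that any non-degenerate integrable mean-zero walk oscillates, i.e. $\limsup S_n=+\infty$ and $\liminf S_n=-\infty$ a.s. (Hewitt--Savage plus the observation that a finite a.s.-constant $\limsup$ would force $\xi_1=0$), which frees the lemma from the lattice hypothesis altogether. Either way the argument closes the gap the paper leaves implicit.
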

Hence, when  both $\bf H$ and $\bf{H'}$ hold, the random variables $\ell_+$ and $\ell_-'$ are $\mathbb P$-a.s. finite; more generally, there are infinitely many  $\mathbb P$-a.s. finite  crossing times in this case. Let us introduce the ladder times $(t_k)_{k\geq 0}$ defined recursively by: $t_0=0$ and, for $k\geq 1$, 
\begin{align}
t_{k+1}:=\left\{
\begin{array}{lll}
\inf\{n>t_k \mid \xi_{t_k+1}+\ldots+\xi_n>0\}& {\rm if} & X_{t_k}^{(0)}\leq -1, \\ \\
\inf\{n>t_k \mid \xi_{t_k+1}'+\ldots+\xi_n'<0\}&{\rm if} & X_{t_k}^{(0)} \geq 0. 
\end{array}
\right.
\end{align}

Notice that, in the first line, the random variable $t_{k+1}$ is an ascending ladder epoch of $S$, while, in the second line,  it is a descending ladder epoch of $S'$. These random times are $\mathbb P$-a.s. finite and the increments $(t_{k+1}-t_k)_{k\geq 0}$ form  a sequence of independent random variables with laws  
\begin{align*}
 \mathcal{L}(t_{k+1}-t_k \mid X_{t_k}^{(0)}<0) =\mathcal{L}(t_1 \mid X_0^{(0)}=x) \quad \text{and} \quad
  \mathcal{L}(t_{k+1}-t_k \mid X_{t_k}^{(0)}\geq 0) =\mathcal{L}(t_1 \mid X_0^{(0)}=y)  
\end{align*}
for any $x < 0 \leq y$. Similarly,  
$$ \mathcal{L}(S_{t_{k+1}}-S_{t_k} \mid X_{t_k}^{(0)}<0) =\mathcal{L}(S_{t_1} \mid X_0^{(0)}=x)= \mu_+, $$
  and
 $$ \mathcal{L}(S_{t_{k+1}}'-S_{t_k}' \mid X_{t_k}^{(0)}\geq 0) =\mathcal{L}(S_{t_1}' \mid X_0^{(0)}=y)= \mu'_-. 
$$ 
It is perhaps worth remarking that, by setting $Y_k:=S_{t_k}-S_{t_{k-1}}$ when $X_{t_k}^{(0)} < 0$ and $Y_k':=S_{t_k}'-S_{t_{k-1}}'$ when $X_{t_k}^{(0)} \geq 0$, the sub-process $(X_{t_k}^{(0)})_{k\geq 0}$ turns out to be an crossing process associated with the distributions $ \mu_+$ and $ \mu'_-$ of $Y_k$ and $Y_k'$ respectively. In other words, the process $(X_{t_k}^{(0)})_{k\geq 0}$ has the same distribution as $\mathcal{X}^{(0)}( \mu_+,  \mu'_-)$.  
 \begin{lem}\label{lem} Assume that both $\bf H$ and $\bf{H'}$ hold. Then, the oscillating process $ \mathcal X^{(0)}(\mu, \mu')$ is recurrent if and only if the oscillating process $\mathcal{X}^{(0)}( \mu_+,  \mu'_-)$ is recurrent.
 \end{lem}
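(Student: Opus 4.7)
The plan is to exploit the identification — stated just above the lemma — of the embedded chain $(X_{t_k}^{(0)})_{k\geq 0}$ with $\mathcal X^{(0)}(\mu_+,\mu'_-)$ in law, combined with the fact (Lemma \ref{abcd}) that under $\bf H$ and $\bf H'$ all ladder times $t_k$ are $\mathbb P$-a.s.\ finite. Since $d=d'=1$, both chains are irreducible on $\mathbb Z$ and recurrence amounts to visiting $0$ infinitely often with probability one. The lemma therefore reduces to the $\mathbb P$-a.s.\ identity of events
$$\{X_n^{(0)}=0\ \text{for infinitely many}\ n\}=\{X_{t_k}^{(0)}=0\ \text{for infinitely many}\ k\}.$$

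The inclusion $\supset$ is immediate: on the right-hand event, $X_n^{(0)}=0$ already at the infinite subsequence $n=t_k$. For the converse, I would establish the following structural statement: for every $n\geq 0$ with $X_n^{(0)}=0$, the unique index $k$ such that $t_k\leq n<t_{k+1}$ satisfies $X_{t_k}^{(0)}=0$. By the very definition of $t_{k+1}$, the partial sum $\xi_{t_k+1}+\ldots+\xi_m$ (resp.\ $\xi'_{t_k+1}+\ldots+\xi'_m$) remains $\leq 0$ (resp.\ $\geq 0$) throughout $t_k\leq m<t_{k+1}$, according to whether $X_{t_k}^{(0)}\leq -1$ or $X_{t_k}^{(0)}\geq 0$. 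A short induction shows that on this interval $X^{(0)}$ stays strictly on the same side of $0$ as $X_{t_k}^{(0)}$, so it is driven by a single law $\mu$ or $\mu'$, and one obtains $X_m^{(0)}=X_{t_k}^{(0)}+(\text{partial sum})$. In the first regime this yields $X_m^{(0)}\leq -1$ throughout, ruling out $X_m^{(0)}=0$; in the second, $X_m^{(0)}\geq X_{t_k}^{(0)}\geq 0$, so $X_n^{(0)}=0$ forces $X_{t_k}^{(0)}=0$.

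Since each interval $[t_k,t_{k+1})$ has $\mathbb P$-a.s.\ finite length, it contains only finitely many instants at which $X^{(0)}$ equals $0$; hence an infinite number of returns of $\mathcal X^{(0)}(\mu,\mu')$ to $0$ forces an infinite number of ladder indices $k$ with $X_{t_k}^{(0)}=0$, which completes the converse inclusion. The main delicate point is precisely this induction inside a ladder interval: the ladder recursion controls only the \emph{partial sum} of $\xi$'s (or $\xi'$'s), and one must verify that this control alone is enough to prevent the chain $X^{(0)}$ itself from switching regime before $t_{k+1}$, so that the ladder mechanism genuinely describes an uninterrupted excursion of the chain within a single half-line.
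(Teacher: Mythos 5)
Your proposal is correct, and it follows the same basic route as the paper: compare the visits to $0$ of the full chain with those of the embedded ladder chain $(X_{t_k}^{(0)})_{k\geq 0}$, which is identified in law with $\mathcal X^{(0)}(\mu_+,\mu'_-)$. The difference lies in the key structural claim, and here your more cautious version is actually the right one. The paper argues that the first return time $\uptau^{(0)}$ to $0$ is itself a ladder epoch; this is not literally true. For instance, starting from $0$ in the positive regime with $\xi'_1=2$ and $\xi'_2=-2$, the chain returns to $0$ at time $2$, yet the partial sum $\xi'_1+\xi'_2=0$ is not $<0$, so $2$ is not a descending ladder epoch of $S'$. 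Your weaker assertion --- that any $n$ with $X_n^{(0)}=0$ either is a ladder time or lies strictly inside a positive-regime ladder interval $[t_k,t_{k+1})$ on which $X_m^{(0)}\geq X_{t_k}^{(0)}\geq 0$, forcing $X_{t_k}^{(0)}=0$ --- is exactly what holds, and combined with the a.s.\ finiteness of each ladder interval it still yields infinitely many ladder indices at $0$, hence recurrence of the embedded chain. The ``delicate point'' you flag (that the ladder recursion controls only partial sums of a single sequence) is resolved precisely by the induction you describe: for $t_k\leq m<t_{k+1}$ the partial-sum constraint keeps $X_m^{(0)}$ in the same regime as $X_{t_k}^{(0)}$ (namely $\leq X_{t_k}^{(0)}\leq -1$, resp.\ $\geq X_{t_k}^{(0)}\geq 0$), so the chain is indeed driven by a single law throughout the interval; only the word ``strictly'' should be dropped on the positive side, where the excursion is confined to $\mathbb Z_0^+$ rather than $\mathbb Z^+$. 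The converse inclusion is the same triviality in both treatments. In short, your argument is a repaired version of the paper's proof rather than a different one.
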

 A proof of this statement for the process $\mathcal{X}^{(1)}$ appears in the recent paper \cite{Bremont} of \textsc{J. Bremont}, lemma $4.2 \, (ii)$. For the sake of completeness, we detail the argument below,
introducing the first return time at $0$ of $\mathcal{X}^{(\alpha)}, 0 \leq \alpha \leq 1$, which will be useful latter on.
 \begin{proof} 
  For any $0\leq \alpha \leq 1$, let $\uptau^{(\alpha)}$  be the first return time at $0$ of $\mathcal{X}^{(\alpha)}$ given by
 $$\uptau^{(\alpha)}:=\inf\{n\geq 1: X_n^{(\alpha)}=0\}.$$
 In the present proof, we only consider the case when $\alpha = 0$.
 
 Starting at $0$, we know that $\uptau^{(0)}<+\infty$ almost surely and since $X_{\uptau^{(0)}}^{(0)}=0$, there are only two possibilities: if $X_{\uptau^{(0)}-1}^{(0)} \geq 1$ then $\uptau^{(0)}$ must be a ladder time while the case $X_{\uptau^{(0)}-1}^{(0)} \leq -1$ will imply the existence of some $k\geq 1$ s.t. $\uptau^{(0)}=C_k$, the $k^{th}$-crossing time of $\mathcal{X}^{(0)}$ and of course that $\uptau^{(0)}$ is also a ladder time. This means $(X_{t_k}^{(0)})_{k\geq 0}$ is recurrent ans so does $\mathcal{X}^{(0)}( \mu_+,  \mu'_-)$. The converse is obvious.
 \end{proof}
 
  It is easily seen that $\mathcal{X}^{(0)}$ and $(X_{t_k}^{(0)})_{k\geq 0}$ admit a common crossing sub-process  since there is at most a crossing moment between two consecutive ladder times $t_k$ and $t_{k+1}$ happening when $X_{t_k}^{(0)} <0$ and $X_{t_{k+1}}^{(0)}\geq 0$ or vice versa. Therefore, we can take advantage of Corollary \ref{Holder} (applied to the process $\mathcal{X}^{(0)}( \mu_+,  \mu'_-)$) to deduce the recurrence of $(X_{t_k}^{(0)})_{k\geq 0}$ and finally that of $\mathcal{X}^{(0)}$ by Lemma \ref{lem}. 
  
 \begin{theo} \label{extend_condition} Let $p, q\in ]0,1[$ s.t. $p+q=1$. Then each of the following is sufficient for the oscillating process $\mathcal{X}^{(0)}(\mu, \mu')$ to be recurrent on its essential class  
 \begin{enumerate}[(a)]
     \item $\left(\mathbb E[\xi_1^-]<\mathbb E[\xi_1^+],\, \mathbb E[(\xi_1^+)^p]<+\infty \right)$ and $\left(\mathbb E[(\xi_1')^+]<\mathbb E[(\xi_1')^-],\, \mathbb E[(\xi_1'^{-})^q]<+\infty \right)$;
      
     \item $\left(\mathbb E[\xi_1^-]=\mathbb E[\xi_1^+],\, \mathbb E[(\xi_1^+)^{1+p}]<+\infty \right)$ and $\left(\mathbb E[(\xi_1')^+]=\mathbb E[(\xi_1')^-],\, \mathbb E[(\xi_1'^{-})^{1+q}]<+\infty \right)$; 
      
     \item $\left(\mathbb E[\xi_1^-]<\mathbb E[\xi_1^+],\, \mathbb E[(\xi_1^+)^p]<+\infty \right)$ and $\left(\mathbb E[(\xi_1')^+]=\mathbb E[(\xi_1')^-],\, \mathbb E[(\xi_1'^{-})^{1+q}]<+\infty \right)$.\\
     The similar condition holds when swapping the roles of $\xi_1$ and $\xi_1'$. 
      
      
 \end{enumerate} 
 \end{theo}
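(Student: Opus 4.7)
The plan is to reduce Theorem \ref{extend_condition} to Corollary \ref{Holder} via the ladder embedding and then transport recurrence back through Lemma \ref{lem}. Each of (a), (b), (c) implies both $\mathbf{H}$ and $\mathbf{H'}$, so by Lemma \ref{abcd} the ascending (resp. descending) ladder times $\ell_+$ and $\ell'_-$ are $\mathbb{P}$-a.s. finite and the laws $\mu_+$, $\mu'_-$ are well defined, with $S_{\mu_+} \subset \mathbb{Z}^+$ and $S_{\mu'_-} \subset \mathbb{Z}^-$. Lemma \ref{lem} identifies the recurrence of $\mathcal{X}^{(0)}(\mu,\mu')$ with that of $\mathcal{X}^{(0)}(\mu_+,\mu'_-)$, and since the latter process has one-sided jumps, it suffices by Corollary \ref{Holder} to exhibit $p, q \in\ ]0,1[$ with $p+q=1$ such that
\[
\mathbb{E}[(S_{\ell_+})^{p}] < +\infty \qquad \text{and} \qquad \mathbb{E}[(-S'_{\ell'_-})^{q}] < +\infty.
\]

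The second step translates the moment hypotheses on $\xi_1, \xi_1'$ into these ladder-height moment bounds. Two classical fluctuation-theoretic facts come into play, one per drift regime. In the \emph{drift regime} ($\mathbb{E}[\xi_1^-] < \mathbb{E}[\xi_1^+] \leq +\infty$), a Wald-type argument combined with the strong law of large numbers yields, for every $r>0$, that $\mathbb{E}[(S_{\ell_+})^r] < +\infty$ iff $\mathbb{E}[(\xi_1^+)^r] < +\infty$ (Kiefer--Wolfowitz). In the \emph{centered regime} ($\mathbb{E}[\xi_1^-] = \mathbb{E}[\xi_1^+] < +\infty$), a theorem of Chow gives $\mathbb{E}[(S_{\ell_+})^r] < +\infty$ iff $\mathbb{E}[(\xi_1^+)^{r+1}] < +\infty$ for every $r>0$; this is precisely why one must upgrade the hypotheses on the $+$-part by one order in (b) and (c). Analogous statements hold for $-S'_{\ell'_-}$, applied to the walk $-S'_n$. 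Case (a) then follows by taking $r=p$ in the drift estimate for $\xi_1$ and $r=q$ in the drift estimate for $-\xi_1'$; case (b) by the centered estimate with $r=p$ and $r=q$ respectively; case (c) by mixing (drift on the $\xi_1$-side, centered on the $\xi_1'$-side); the symmetric variant follows by swapping.

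The principal obstacle is the centered-regime estimate. The fact that integrability of $(\xi_1^+)^{r+1}$ is needed to control $(S_{\ell_+})^r$ is a genuine Wiener--Hopf phenomenon whose proof rests on the Sparre-Andersen--Spitzer-Baxter identities, so I would invoke it as a black box from Chow's 1986 paper (or from the standard monograph treatments of fluctuation theory) rather than reprove it here. By contrast, the drift-regime estimate is elementary: Wald's identity yields the first moment, and a direct overshoot comparison between $\{S_{\ell_+} > t\}$ and the tail $\{\xi_1^+ > t\}$ extends this to every $r>0$. Once both one-sided moment bounds are secured, Corollary \ref{Holder} applied to $\mathcal{X}^{(0)}(\mu_+, \mu'_-)$ together with Lemma \ref{lem} closes the argument with no further work.
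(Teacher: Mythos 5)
Your proposal is correct and follows essentially the same route as the paper: reduce to the one-sided process $\mathcal{X}^{(0)}(\mu_+,\mu'_-)$ via the ladder-time embedding, transfer recurrence back through Lemma \ref{lem}, and verify the H\"older moment hypotheses of Corollary \ref{Holder} for the ladder heights --- elementarily via Wald (the paper uses the subadditivity $(\sum a_i)^p\leq\sum a_i^p$ together with Wald applied to $(\xi_i^+)^p$, plus a truncation argument to get $\mathbb E[t_1]<+\infty$ when $\mathbb E[\xi_1^+]=+\infty$) in the drifted regime, and by invoking the Chow--Lai theorem as a black box in the centered regime. The only differences are cosmetic (your overshoot-comparison phrasing of the drift case, and citing Chow 1986 rather than Chow--Lai 1979), so no further comment is needed.
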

 \begin{proof}
 As mentioned above, it remains to check that $\mathbb E[(Y_n)^p]<+\infty$ and $\mathbb E[(-Y_n')^q]<+\infty$. The set of conditions $(a)$ means that the chain moves with positive drift on the left and negative drift on the right while $(b)$ represents the center case which was already done by \textsc{Chow} and \textsc{Lai} (see \cite{ChowLai}).  The others are partly mixed from both of $(a)$ and $(b)$, so we will leave the proof only for the first case. 
 
 Notice that $0<p<1$ and by the Wald's identity, we obtain
     \begin{align*}
       \mathbb E[(\xi_1+ \ldots+\xi_{t_1})^p \mid X_0^{(0)}=x<0] &\leq  \mathbb E[({\xi_1^+}+ \ldots+{\xi_{t_1}^+})^p\mid X_0^{(0)}=x<0]\\
       &\leq \mathbb E[(\xi_1^+)^p+\ldots +(\xi_{t_1}^+)^p\mid X_0^{(0)}=x<0]\\
       &= \mathbb E[(\xi_1^+)^p] \, \mathbb E[t_1\mid X_0^{(0)}=x<0]<+\infty
     \end{align*}
     due to $\mathbb E[t_1 \mid X_0^{(0)}=x<0]<+\infty$. Indeed, if $\mathbb E[\xi_1^+]<+\infty$ then $\mathbb E[\xi_1]>0$ and $\mathbb E[\vert \xi_1\vert]<+\infty$ and the \textsc{Feller'}s result tells us that $\mathbb E[t_1\mid X_0^{(0)}=x <0]<+\infty$ (see \cite{Feller}). On the other hands, if $\mathbb E[\xi_1^+]=+\infty$ then there is $L>0$ s.t. $\xi_n^{(L)}=\min\{\xi_n, L\}$ (which has finite first moment) satisfies $\mathbb E[\xi_n^{(L)}]=\mathbb E[\xi_1^{(L)}]>0$. The first ascending ladder time $t_1^{(L)}$ associated with $S_n^{(M)}= \xi_1^{(L)}+\ldots + \xi_n^{(L)}$ has finite expectation by what we just said. Therefore, $t_1$ is integrable since $t_1 \leq t_1^{(L)}$. The argument showing $\mathbb E[(-\xi_1'-\ldots -\xi_{t_1}')^q \mid X_0^{(0)}=x\geq 0] <+\infty$ goes exactly the same line. 
 \end{proof}
 \begin{remark}
  As a direct consequence of the above proof and Corollary \ref{positive-recurrent}, we deduce that when $\mathbb E[\xi_1^-]<\mathbb E[\xi_1^+]<+\infty$ and $\mathbb E[(\xi_1')^+]<\mathbb E[(\xi_1')^-]<+\infty$ then the process $\mathcal{X}^{(0)}$ is positive recurrent. Indeed, in this case, $\mathbb{E}[\tau^{(0)}] <+\infty$ and the result follows by a classical theorem of induced processes. 
 \end{remark} 
  \subsection{Recurrence of $ \mathcal{X}^{(\alpha)}$ with $0\leq \alpha\leq 1$}
 We end this section by proving our main result 
 
\begin{coro} \label{recurrence_alpha} If at least one of the assumptions of Theorem \ref{extend_condition} is satisfied, then the general oscillating process $\mathcal X^{(\alpha)}(\mu, \mu')$ is recurrent.
\end{coro}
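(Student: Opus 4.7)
The plan is to reduce recurrence of $\mathcal{X}^{(\alpha)}$ to recurrence of $\mathcal{X}^{(0)}$, which is already provided by Theorem \ref{extend_condition}. The key observation is that, for every $\alpha \in [0,1]$, the transition kernel of $\mathcal{X}^{(\alpha)}$ at any state $x \neq 0$ is exactly the same (it is prescribed by $\mu$ when $x\leq -1$ and by $\mu'$ when $x\geq 1$, independently of $\alpha$); only the law of the jump performed from the state $0$ varies with $\alpha$. Since Section~4 assumes $d=d'=1$, each $\mathcal{X}^{(\alpha)}$ has a unique irreducible class containing $0$, so it suffices to prove that the first return time $\tau^{(\alpha)}:=\inf\{n\geq 1\mid X_n^{(\alpha)}=0\}$ is almost surely finite when we start from $0$.

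First I would introduce, for any $y\in\mathbb{Z}$, the hitting time $T_0(y):=\inf\{n\geq 0\mid X_n=0\mid X_0=y\}$ of $0$ by the Markov chain whose transitions off of $0$ are those common to every $\mathcal{X}^{(\alpha)}$. Because these off-diagonal transitions do not depend on $\alpha$, the distribution of $T_0(y)$ for $y\neq 0$ is the same whether one runs $\mathcal{X}^{(0)}$ or $\mathcal{X}^{(\alpha)}$ up to the first visit to $0$.

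Next I would argue that $\mathbb{P}[T_0(y)<+\infty]=1$ for every $y\in\mathbb{Z}$. By Theorem \ref{extend_condition}, the chain $\mathcal{X}^{(0)}(\mu,\mu')$ is recurrent on its unique essential class $E$, which contains $0$; hence $\mathbb{P}_y[T_0<+\infty]=1$ for all $y\in E$ by the standard fact that every state of a recurrent essential class hits every other state almost surely. For $y\notin E$, the absorption statements of Theorem \ref{irreducibleclass} and Theorem \ref{generalcase} guarantee that the chain enters $E$ after finitely many steps almost surely, so the claim extends to all $y\in\mathbb{Z}$ by the strong Markov property.

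Finally I would condition on the first jump $\eta_1$ out of $0$, whose law is $\alpha\mu+(1-\alpha)\mu'$, and apply the strong Markov property:
\begin{equation*}
\mathbb{P}_0\bigl[\tau^{(\alpha)}<+\infty\bigr]
=\sum_{y\in\mathbb{Z}}\bigl(\alpha\mu(y)+(1-\alpha)\mu'(y)\bigr)\,\mathbb{P}\bigl[T_0(y)<+\infty\bigr]=1,
\end{equation*}
with the convention that $T_0(0)=0$. This shows that $\mathcal{X}^{(\alpha)}$ returns to $0$ almost surely, and hence is recurrent on its unique essential class. There is no serious obstacle here: the only point requiring vigilance is checking that $\mathbb{P}_y[T_0<+\infty]=1$ uniformly in $y$, and this is handled by combining recurrence inside $E$ with the absorption results from Section~2.
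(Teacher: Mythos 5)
Your proposal is correct and follows essentially the same route as the paper: a first-step decomposition at $0$ combined with the observation that the transition kernel away from $0$ does not depend on $\alpha$, so that recurrence reduces to that of $\mathcal X^{(0)}$ (the paper packages the same computation as $\mathbb P_0[\uptau^{(\alpha)}=n]=\alpha\,\mathbb P_0[\uptau^{(1)}=n]+(1-\alpha)\,\mathbb P_0[\uptau^{(0)}=n]$ and sums over $n$). Your variant, which phrases everything through the hitting times $T_0(y)$ and the absorption results of Section~2 rather than explicitly invoking the recurrence of $\mathcal X^{(1)}$, is an equivalent bookkeeping of the same argument.
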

 \begin{proof}
   It is clear that $\mathcal{X}^{(0)}$ and $\mathcal{X}^{(1)}$ (suitably modified) are recurrent. Now, we have
 \begin{align*}
     \mathbb{P}_0[\uptau^{(\alpha)}=n] &=\mathbb{P}_0  [X_1^{(\alpha)}\neq 0, X_2^{(\alpha)} \neq 0,\ldots,X_{n-1}^{(\alpha)} \neq 0, X_n^{(\alpha)} =0 ]\\
     &= \displaystyle \sum_{k=1}^{+\infty} \mathbb{P}_0[X_1^{(\alpha)}=k]\,\mathbb{P}_k  [X_1^{(\alpha)}\neq 0, X_2^{(\alpha)} \neq 0,\ldots,X_{n-1}^{(\alpha)} \neq 0, X_{n-1}^{(\alpha)} = 0 ] \\
     &+ \displaystyle \sum_{k=-\infty}^{-1} \mathbb{P}_0[X_1^{(\alpha)}=k]\,\mathbb{P}_k  [X_1^{(\alpha)}\neq 0, X_2^{(\alpha)} \neq 0,\ldots,X_{n-1}^{(\alpha)} \neq 0, X_{n-1}^{(\alpha)} = 0]\\
     &= \alpha\, \underbrace{\displaystyle \sum_{k\neq 0} \mu(k)\,\mathbb{P}_k[\uptau^{(1)}=n-1]}_{\mathbb{P}_0[\uptau^{(1)}=n]} +(1-\alpha)\,\underbrace{\displaystyle \sum_{k\neq 0} \mu'(k)\,\mathbb{P}_k[\uptau^{(0)}=n-1]}_{\mathbb{P}_0[\uptau^{(0)}=n]}. 
 \end{align*}
 Summing over $n\geq 1$ in both sides, it readily implies $\mathbb{P}_0[\uptau^{(\alpha)}<+\infty]=1$ as expected.
 
 \end{proof}


\begin{thebibliography}{99}

\bibitem{Billingsley}Billingsley, P. (1968):  { Convergence of
		Probability Measures},  Willey, New York.
		
\bibitem{Bremont} Br\'emont, J. (2021):  On homogeneous and oscillating random walks on the integers,   hal-03376996.
		

\bibitem{ChowLai} Chow, Y. S.,  Lai, T. L. (1979): Moments of ladder variables for driftless random walks, Z. Wahrsch.
Verw. Gebiete, Vol. 48, 253--257. 


	\bibitem{Doney12} 
	Doney, R. A.  (2012):  Local behaviour of first passage probabilities. Probability Theory and Related Fields, 152(3-4), 559--588.

	
\bibitem{Feller} Feller, W. (1971):  An Introduction to Probability Theory and its Applications,
Vol. II., second edition, Wiley, New York.


	\bibitem{Iglehart74}
	Iglehart, D. L.   (1974): {Functional central limit theorems for random walks conditioned to stay positive},  The Annals of Probability,  Vol. 2, No. 4, 608--619.

\bibitem{KeilsonServi} Keilson, J., Servi, L. D. (1986): Oscillating random walk models for $\text{GI}/\text{G}/1$ vacation systems with Bernoulli schedules. Journal of Applied probability, 23(3), 790-802.	
	
		\bibitem{Kemperman} Kemperman, J. H. B.    (1974):
	{The oscillating random walk}, 
	Stoch. Process and Their Appl., Vol. 2 ,   1--29.

	
	\bibitem{KestenSpitzer} Kesten, H.,  Spitzer, F. (1965): Random walk on countably infinite Abelian groups. Acta Mathematica, 114, 237-265.

 
 
 \bibitem{Knight78} Knight, F. B. (1978): On the absolute difference chains. Zeitschrift f{\"u}r Wahrscheinlichkeitstheorie und Verwandte Gebiete, 43(1), 57-63.

\bibitem{MijaVysot} Mijatovi\'{c}, A., Vysotsky, V. (2018): Stationary entrance Markov chains, inducing, and level-crossings of random walks. arXiv preprint arXiv:1808.05010.
 

 
	\bibitem{MarcLong} Ngo, H-L., Peign\'{e}, M. (2020):  Limit theorem for perturbed random walks.To appear in Theory of Stochastic Processes.
	
	 

\bibitem{MarcWoess}Peign\'{e}, M., Woess, W. (2011): Stochastic dynamical systems with weak contractivity properties I. Strong and local contractivity, Colloquium Mathematicum 125 (2011), no. 1.


\bibitem{Revuz} Revuz, D. (2008): Markov chains. Elsevier.


 
\bibitem{RogozinFoss}Rogozin, B. A., Foss, S. G. (1978): Recurrency of an oscillating random walk. Theory of Probability and Its Applications, 23(1), 155-162.


 \bibitem{Spitzer} Spitzer, F. (2013): Principles of random walk (Vol. 34). Springer Science and Business Media.
  



  

\end{thebibliography}
 \end{document}